\newtheorem{theorem}{Theorem}
\newtheorem{remark}{Remark}
\newtheorem{proposition}{Proposition}
\journal{a journal}
\begin{document}

\begin{frontmatter}

\phantomsection
\addcontentsline{toc}{chapter}{Time--domain response of nabla discrete \\ fractional order systems}
\title{Time--domain response of nabla discrete \\ fractional order systems}


\author[ustc]{Yiheng Wei}
\author[ude]{Qing Gao}
\author[ustc]{Songsong Cheng}
\author[ustc]{Yong Wang\corref{cor1}}
\address[ustc]{Department of Automation, University of Science and Technology of China, Hefei, 230026, China}
\address[ude]{Faculty of Engineering, University of Duisburg-Essen, D-45117 Essen, Duisburg, Germany}

\cortext[cor1]{Corresponding author. E-mail: yongwang@ustc.edu.cn.}

\begin{abstract}
This paper investigates the time--domain response of nabla discrete fractional order systems by exploring several useful properties of the nabla discrete Laplace transform and the discrete Mittag--Leffler function. In particular, we establish two fundamental properties of a nabla discrete fractional order system with nonzero initial instant: i) the existence and uniqueness of the system time--domain response; and ii) the dynamic behavior of the zero input response. Finally, one numerical example is provided to show the validity of the theoretical results.
\end{abstract}

\begin{keyword}

Nabla discrete fractional calculus \sep discrete Mittag-Leffler function \sep $N$-transform \sep time--domain response.
\end{keyword}

\end{frontmatter}


\section{Introduction}\label{Section 1}
Fractional calculus is a natural generalization of classical calculus and its inception can be traced back to a correspondence between Leibniz and L'H\^{o}pital in 1695 regarding the possible value of a half order derivative \cite{Oldham:1974Book}, i.e., $\frac{{{{\rm{d}}^{0.5}}f(x)}}{{{\rm{d}}{x^{0.5}}}}$. After over 300 years of development, fractional calculus has been widely used in many branches of science and engineering, and is particularly suitable for modeling physical plants that behave anomalously such as viscoelastic material \cite{Bagley:1983JR} and diffusion processes \cite{Metzler:2000PR} (see \cite{Chen:2017AMC,Sun:2018CNSNS,West:2016Book} and their references for  many examples).

Despite the important success achieved by continuous fractional calculus \cite{Liu:2015ISA,Wei:2017FCAAa,Sheng:2018ISA}, research on discrete fractional calculus is still not mature. In 1974, Diaz and Osler initiated a study of discrete fractional calculus by introducing an infinite series as the $\alpha$-th difference operation which is a generalization of the $m$-th difference operation \cite{Diaz:1974MC}. Unfortunately, the method in \cite{Diaz:1974MC} requires calculation of an infinite series which is often time-consuming, or even worse, infeasible. A more useful version of discrete fractional calculus was then proposed by Granger and Joyeux in \cite{Gray:1988MC} where the infinite series was replaced by a finite one. So far, a large volume of pioneering works on discrete fractional calculus have been reported, see, e.g., \cite{Anastassiou:2010MCM,Yucra:2013TAC,Tseng:2015TCSI,Cui:2018ISA,Liu:2019SP}. For a more comprehensive introduction on the most recent advances in relevant field, the readers can refer to the monographs \cite{Goodrich:2015Book,Ostalczyk:2015Book,Cheng:2011Book} and the papers cited therein for details. Remarkably, a $Z$-transform based approach was proposed to analyze the solution of a class of fractional difference equations by Cheng in \cite{Cheng:2011Book}. However, several fundamental dynamic properties of the solution have not been discussed yet. Moreover, the approach in \cite{Cheng:2011Book} can only describe the solution at each sampling point. To analyze the solution without the process of discretization, the authors in \cite{Atici:2009EJQTDE} proposed an $N$-transform approach which can be regarded as a discrete analog of the conventional Laplace transform. Based on this $N$-transform, the authors in \cite{Abdeljawad:2012ADE} presented an explicit form of the solution to a scalar fractional difference equation with the derivative order $\alpha\in (0,1)$, by using the semigroup property of the discrete Mittag--Leffler function; a similar but more rigorous treatment can be found in \cite{Mohan:2014CMS}. The convergence of this solution in \cite{Abdeljawad:2012ADE,Mohan:2014CMS} was investigated in \cite{Tan:2015ISPL}, based on which the computational efficiency of classical LMS algorithms was significantly improved. Similarly, a modified LMS algorithm was designed by switching difference order \cite{Cheng:2017ISA}. With the help of the series representation, \cite{Wei:2017FCAAa} and \cite{Wei:2019CNSNS} explored the properties of the discrete fractional calculus. \cite{Baleanu:2017CNSNS,Wu:2017AMC,Wei:2018ISA} investigated the stability issue via direct Lyapunov method. Nevertheless, several critical issues are still yet to be addressed: i) those previous results cannot be directly used in the general case that the difference order $\alpha\in(n-1,n),~n\in\mathbb{Z}_+$, especially with nonzero initial instant; ii) several dynamic characteristics of this solution, like convergence, monotonicity and overshoot, still need to be investigated systematically; and iii) the fundamental properties of the nabla discrete Laplace transform and the discrete Mittag--Leffler function need to be explored. To the authors' best knowledge, few papers have been published on solving these problems, which directly motivates this contribution.

The rest of this paper is structured as follows. Section \ref{Section 2} provides some basic facts on this work. In Section \ref{Section 3}, we analyze the time-domain response of a class of nabla discrete fractional order systems in the framework of $N$-transform. An explicit response characterized by discrete Mittag--Leffler functions is developed. In addition, the performance analysis of the system under zero input is conducted. The numerical simulation is performed in Section \ref{Section 4}. Finally, conclusions in Section \ref{Section 5} close the paper.

\section{Preliminaries}\label{Section 2}
In this section, some basic definitions and tools for nabla discrete fractional calculus are reformulated from \cite{Atici:2009EJQTDE}.

The $\alpha$-th nabla fractional sum of a function $f:  \mathbb{N}_{a+1}\to\mathbb{R}$ is defined by
\begin{equation}\label{Eq1}
{\textstyle { {}_a^{}\nabla_k^{-\alpha} }f\left( k \right)  \triangleq \sum\nolimits_{j = 0}^{k-a-1} {{{\left( { - 1} \right)}^{j}}\big( {\begin{smallmatrix}
{ - \alpha }\\
j
\end{smallmatrix}} \big)f\left( {k - j}\right)}},
\end{equation}
where $\alpha>0$, $k\in \mathbb{N}_{a+1}$, $\mathbb{N}_{a+1}\triangleq\{a+1,a+2,a+3,\cdots\}$, ${\left( {\begin{smallmatrix}
{ p }\\
q
\end{smallmatrix}} \right)} \triangleq \frac{{\Gamma \left( {p+1  } \right)}}{{\Gamma \left( {q+1} \right)\Gamma \left( {p-q+1} \right)}}$ and $\Gamma\left(\cdot\right)$ is the Gamma function.

The generalized $N$-transform of a function $f: \mathbb{N}_{a+1}\to \mathbb{R}$ is defined by
\begin{equation}\label{Eq2}
{\textstyle
F\left( s \right)={\mathscr N}_a\left\{ {f\left( k \right)} \right\} \triangleq \sum\nolimits_{k = 1}^{+\infty}  {{{\left( {1 - s} \right)}^{k - 1}}f\left( k+a \right)},~s\in\mathbb{C}},
\end{equation}
which is also called the nabla discrete Laplace transform. The inverse nabla Laplace transform can be obtained as $f\left( k \right)={\mathscr N}_a^{-1}\left\{ {F\left( s \right)} \right\}  \triangleq  \frac{1}{{2\pi {\rm{j}}}}\oint_c {F\left( s \right){{(1 - s)}^{ - k + a}}{\rm{d}}s} ,k \in {\mathbb{N}_{a + 1}}$, where $c$ is a closed curve rotating around the point $(1,{\rm j}0)$ clockwise and it also locates in the convergent domain of $F(s)$.


The following nabla Caputo fractional difference is used in this work
\begin{equation}\label{Eq3}
{}_a^{}\nabla_k ^\alpha f\left( k \right) \triangleq {}_a^{}\nabla_k ^{ \alpha-n }{\nabla ^n}f\left( k \right),
\end{equation}
where $n-1< \alpha<n$, $n\in\mathbb{Z}_+$, $f:\mathbb{N}_{a-n} \to \mathbb{R}$ and ${\nabla ^n}$ represents the normal $n$-th backward difference operation

\begin{equation}\label{Eq4}
{\textstyle
{\nabla ^n}f\left( k \right) \triangleq \sum\nolimits_{j = 0}^n {{{{\left(\hspace{-1pt}{ - 1} \hspace{-1pt}\right)}^j}\left({\begin{smallmatrix}
{ n }\\
j
\end{smallmatrix}}\right)}f\left( {k - j} \right)}.}
\end{equation}

By using the definition (\ref{Eq3}), one obtains that the fractional difference of a constant equals to zero, which coincides with the traditional integer difference case ($\alpha=1$). More importantly, with this definition, the $N$-transform of a fractional  difference has several integer order differences as its initial conditions, which makes $N$-transform more convenient to calculate. These are the two main reasons why (\ref{Eq3}) is usually used to define a nabla fractional difference. In this work, this definition is adopted again.

\section{Main Results}\label{Section 3}
This section explores several useful properties of the nabla Laplace transform and the discrete Mittag--Leffler function. By using these properties, the time--domain response of a class of discrete fractional order systems is then analyzed.

\subsection{Some useful properties}

The following theorem is the initial value and final value theorem of the nabla discrete Laplace transform, corresponding to those of classical $Z$-transform \cite{Cheng:2011Book}.

\begin{theorem}\label{Theorem 1}
Let $f: \mathbb{N}_{a+1}\to \mathbb{R}$ be a bounded function and suppose $f(+\infty)$ exist. Then the following equalities hold:
\begin{enumerate}[i)]
  \item  $f\left( a+1 \right) = \mathop {\lim }\limits_{s \to 1} F\left( s \right)$; and
  \item $ f\left( a+\infty\right) = \mathop {\lim }\limits_{s \to 0} sF\left( s \right)$,
\end{enumerate}
where $F(s)$ is defined in (\ref{Eq2}).
\end{theorem}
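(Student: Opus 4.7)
The plan is to view $F(s)$ as a power series in the variable $u \triangleq 1-s$ with coefficients $c_k \triangleq f(a+k)$, namely $F(s)=\sum_{k=1}^{\infty}u^{k-1}c_k$. Since $f$ is bounded, the Weierstrass $M$-test gives absolute and uniform convergence on every compact subset of the open disk $|1-s|<1$, so $F$ is continuous on this disk. Part i) is then immediate: as $s\to 1$, every term with $k\ge 2$ carries a factor $(1-s)^{k-1}\to 0$, the $k=1$ term equals $f(a+1)$, and uniform convergence legitimises the interchange of limit and sum.

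For part ii), I would set $L\triangleq f(a+\infty)$ and decompose $f(a+k)=L+\varepsilon_k$ with $\varepsilon_k\to 0$. The geometric identity $\sum_{k=1}^{\infty}(1-s)^{k-1}=1/s$, valid for $|1-s|<1$, then gives
\begin{equation*}
sF(s)=L+s\sum_{k=1}^{\infty}(1-s)^{k-1}\varepsilon_k,
\end{equation*}
so the task reduces to showing that the residual tends to $0$ as $s\to 0$ from within the disk of convergence. The key step is an Abel-type splitting: given $\eta>0$, choose $K$ so that $|\varepsilon_k|<\eta/2$ for all $k>K$. The head $s\sum_{k=1}^{K}(1-s)^{k-1}\varepsilon_k$ is a finite sum of continuous functions that vanishes at $s=0$, while the tail is dominated by $(\eta/2)\,|s|\sum_{k>K}|1-s|^{k-1}\le(\eta/2)\,|s|/(1-|1-s|)$, which stays close to $\eta/2$ for $s$ near $0$ along, say, the positive real axis (where the ratio is exactly $1$). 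Letting $s\to 0$ first and then $\eta\to 0$ completes the argument.

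The main obstacle is part ii), because $s=0$ sits on the boundary of the disk $|1-s|<1$ where the defining series for $F$ may diverge; one cannot merely appeal to continuity as in part i), and the Abel-type decomposition above is precisely what lets the extra hypothesis that $f(+\infty)$ exists do its work. A minor subsidiary issue is specifying how $s$ approaches $0$ in the complex plane, which is handled by restricting the approach to a nontangential direction (or equivalently, to the positive real axis as is customary in $Z$-transform treatments), so that $|s|/(1-|1-s|)$ stays bounded uniformly.
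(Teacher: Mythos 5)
Your proof is correct. Part i) is essentially the paper's argument: both exploit boundedness of $f$ and the geometric-series bound on $|1-s|<1$ (the paper writes the explicit estimate $|F(s)-f(a+1)|\le M\,\frac{|1-s|}{1-|1-s|}$, you phrase it as uniform convergence and continuity of the power series in $u=1-s$); these are interchangeable. Part ii), however, is genuinely different from the paper. The paper uses the transform formula ${\mathscr N}_a\{\nabla^1 f(k)\}=sF(s)-f(a)$ and then passes the limit $s\to 0$ inside the infinite sum $\sum_{k\ge 1}(1-s)^{k-1}\nabla^1 f(k+a)$ so that the series telescopes to $\lim_{k\to\infty}[f(k+a)-f(a)]$ --- a final-value-theorem style derivation in which the crucial interchange of limit and summation is asserted rather than justified (the series at $s=0$ converges only conditionally, by telescoping, so dominated or uniform convergence is not automatic). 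Your route instead subtracts the limit $L=f(a+\infty)$, uses $\sum_{k\ge1}(1-s)^{k-1}=1/s$ to write $sF(s)=L+s\sum_{k\ge1}(1-s)^{k-1}\varepsilon_k$, and closes with an Abel-type head/tail splitting; this supplies exactly the justification the paper's proof skips, and it also makes explicit the mode of approach ($s\to 0$ nontangentially, or along the positive real axis, inside $|1-s|<1$), a point the paper leaves implicit. What the paper's approach buys is brevity and a structural link to the transform of the backward difference; what yours buys is a fully rigorous Abelian argument in which the hypothesis that $f(+\infty)$ exists visibly does the work.
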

\begin{proof}
i) A direct mathematical derivation on (\ref{Eq2}) yields
\begin{eqnarray}\label{Eq5}
\begin{array}{l}
F\left( s \right) = f\left( a+1 \right)+{\left( {1 - s} \right)} f\left( a+2 \right)+{\left( {1 - s} \right)}^2 f\left( a+3 \right)+\cdots.
\end{array}
\end{eqnarray}

Considering the boundedness of $f(n)$, i.e., there exists a constant $M$ satisfying $|f(n)|\le M$ for any $n\in \mathbb{N}$, it follows
\begin{eqnarray}\label{Eq6}
\begin{array}{rl}
|F\left( s \right)-f\left( a+1 \right)|=&\hspace{-6pt}|{\left( {1 - s} \right)} f\left( a+2 \right)+{\left( {1 - s} \right)}^2 f\left( a+3 \right)+\cdots|\\
\le&\hspace{-6pt}|{\left( {1 - s} \right)} f\left( a+2 \right)|+|{\left( {1 - s} \right)}^2 f\left( a+3 \right)|+\cdots\\
\le&\hspace{-6pt} M\big(| 1 - s |+|1 - s |^2+\cdots\big)\\
\le&\hspace{-6pt} M\frac{| 1 - s |}{1-|1-s|}.
\end{array}
\end{eqnarray}
The statement i) can be obtained letting $s\to 1$ on the right-hand side of (\ref{Eq6}).

ii) With the help of
\begin{eqnarray}\label{Eq7}
 {\textstyle {\mathscr N}_a\big\{ {{\nabla ^n }f\left( k \right)} \big\} = {s^n }{\mathscr N}_a\left\{ {f\left( k \right)} \right\} - \sum\nolimits_{j = 0}^{n - 1} {{s^{n  - j - 1}}} {\nabla ^j} {f\left( a \right)},n\in\mathbb{N}_+,}
\end{eqnarray}
one has
\begin{equation}\label{Eq8}
\begin{array}{rl}
{\mathscr N}_a\big\{ {\nabla^1 f\left( k \right)} \big\} =&\hspace{-6pt} \sum\nolimits_{k = 1}^{+ \infty}  {{{\left( {1 - s} \right)}^{k - 1}}\nabla^1 f\left( k +a\right)} \\
 =&\hspace{-6pt} sF\left( s \right) - f\left( a \right).
\end{array}
\end{equation}

Hence,
  \begin{equation}\label{Eq9}
    \begin{array}{rl}
    \mathop {\lim }\limits_{s \to 0} \sum\nolimits_{k = 1}^{+ \infty}  {{{\left( {1 - s} \right)}^{k - 1}}\nabla^1 f\left( k+a \right)}  =&\hspace{-6pt} \mathop {\lim }\limits_{s \to 0} \left[ {sF\left( s \right) - f\left( a \right)} \right]\\
    =&\hspace{-6pt} \sum\nolimits_{k = 1}^{+ \infty}  {\mathop {\lim }\limits_{s \to 0} {{\left( {1 - s} \right)}^{k - 1}}\nabla^1 f\left( k+a \right)} \\
    =&\hspace{-6pt} \sum\nolimits_{k = 1}^{+ \infty} {\nabla^1 f\left( k+a \right)} \\
    =&\hspace{-6pt}\mathop {\lim }\limits_{k \to  {+ \infty} } \left[ {f\left( k+a \right) - f\left( a \right)} \right],
    \end{array}
    \end{equation}
    from which statement ii) can be concluded immediately.
\end{proof}

Along this way, a general case follows
\begin{equation}\label{Eq10}
{\textstyle f\left( {a + \kappa } \right) = \mathop {\lim }\limits_{s \to 1} \frac{{F\left( s \right) - \sum\nolimits_{k = 1}^{\kappa  - 1} {{{\left( {1 - s} \right)}^{k - 1}}f\left( {a + k} \right)} }}{{{{\left( {1 - s} \right)}^{\kappa  - 1}}}},\kappa\in\mathbb{Z}_+.}
\end{equation}

\begin{theorem}\label{Theorem 2}
Let $f: \mathbb{N}_{a+1}\to \mathbb{R}$ and $F(s)={\mathscr N}_a\{f(k)\}$. $f(k)$ is convergent with respect to $k$ if and only if all the main poles of $F(s)$ satisfy $|s-1|>1$ (see Fig. \ref{Fig1}).
\begin{figure}[!htbp]
\centering
\includegraphics[width=0.65\columnwidth]{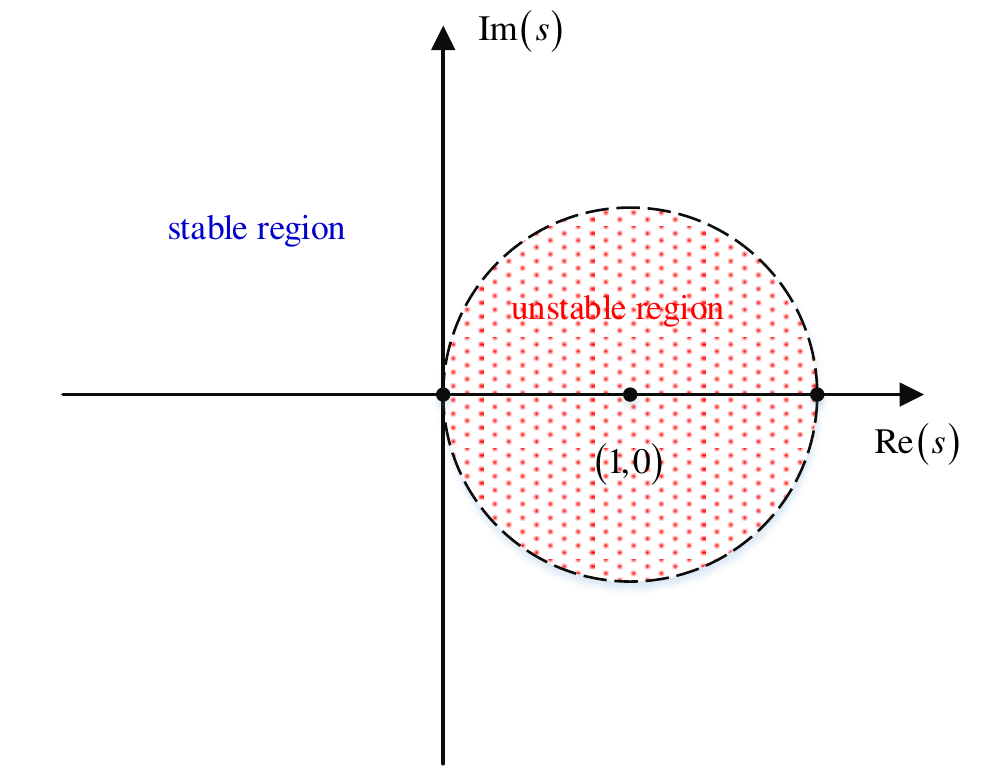}\vspace{-10pt}
\caption{The schematic diagram of stable region in $s$ domain.}\label{Fig1}
\end{figure}
\end{theorem}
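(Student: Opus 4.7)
The plan is to reduce the theorem to the inverse $N$-transform of elementary rational terms followed by an asymptotic analysis. Writing $F(s)=\sum_i\sum_{j=1}^{m_i}c_{i,j}/(s-s_i)^j$ via partial fractions over its poles $s_i$ of multiplicities $m_i$, it suffices to classify when the corresponding constituent sequences of $f(k)$ admit a finite limit.

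The cornerstone calculation is the inverse $N$-transform of a simple pole. Using definition (\ref{Eq2}), the sequence $f(k)=(1-s_0)^{-(k-a-1)}$ has $N$-transform $\sum_{k=1}^{+\infty}[(1-s)/(1-s_0)]^{k-1}=(1-s_0)/(s-s_0)$ provided $|1-s|<|1-s_0|$, whence $\mathscr{N}_a^{-1}\{1/(s-s_0)\}(k)=(1-s_0)^{-(k-a)}$. Higher-order poles $(s-s_0)^{-j}$ then follow by differentiating in $s_0$, producing a leading factor of the form $k^{j-1}(1-s_0)^{-k}$. Each constituent therefore has modulus comparable to $k^{j-1}|1-s_0|^{-k}$.

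For sufficiency, if every main pole $s_i$ satisfies $|s_i-1|>1$, then $|1-s_i|>1$ and every constituent decays geometrically, so $f(k)$ converges (either to zero, or to the constant contributed by an admissible simple pole at $s=0$). For necessity I would argue by contrapositive: a main pole $s_0$ with $|s_0-1|<1$ produces a geometrically growing term, while $|s_0-1|=1$ with $s_0\neq 0$ yields a bounded but non-converging oscillation, since $(1-s_0)^{-k}$ retains unit modulus but acquires a non-trivial phase at each step. Linear independence of the sequences $\{k^{j-1}(1-s_i)^{-k}\}$ across distinct $(s_i,j)$ rules out cancellation among constituents, so any offending pole forces $f(k)$ to diverge. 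A cross-check with Theorem \ref{Theorem 1}ii) should then be possible: when all main poles lie in $|s-1|>1$, $sF(s)$ is analytic at $s=0$ and $\lim_{s\to 0}sF(s)$ recovers the limit $f(a+\infty)$.

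The main obstacle is the boundary case $|s_0-1|=1$, where convergence depends delicately on $\arg(1-s_0)$ and only $s_0=0$ produces a genuine limit; this is precisely why the theorem restricts attention to \emph{main} poles, which I read as all poles other than an admissible simple one at $s=0$. A secondary subtlety is justifying the partial fraction reduction when $F(s)$ is not assumed rational; a cleaner alternative routes through the inverse contour integral stated after (\ref{Eq2}), deforming the closed curve $c$ outside the disc $|s-1|\leq 1$ so that $(1-s)^{-k+a}$ decays uniformly along it and forces $f(k)\to 0$.
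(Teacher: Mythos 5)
Your argument is correct in substance but follows a genuinely different route from the paper. The paper's proof is a two-line reduction: it sets $g(k)=f(k+1)$ and $1-s=z^{-1}$, observes that $F(s)=G(z)$ where $G$ is the classical $Z$-transform of $g$, and then simply cites $Z$-domain convergence theory, so that the familiar condition $|z|<1$ on the main poles translates into $|s-1|>1$. You instead work directly in the $s$-domain: partial fractions, the explicit inverse pair ${\mathscr N}_a^{-1}\left\{ \frac{1}{s-s_0} \right\}=(1-s_0)^{-(k-a)}$ (your computation of this is correct), polynomial-times-geometric asymptotics for higher-order poles, and a linear-independence argument to exclude cancellation. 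In effect you re-prove, inside the $N$-transform calculus, the very $Z$-domain lemma the paper invokes; your implicit geometric series in $\frac{1-s}{1-s_0}$ is the same change of variable the paper makes explicit. What each approach buys: the paper's reduction is short and silently inherits whatever generality and boundary-case conventions the cited $Z$-theory carries (including the undefined notion of ``main poles''), while your argument is self-contained, pins down concretely why $|s-1|=1$ is the dividing circle, and correctly identifies the two genuine subtleties that the paper glosses over --- the boundary pole at $s=0$ (needed so that constant, i.e.\ convergent but nonzero-limit, sequences such as $f\equiv 1$ with $F(s)=1/s$ do not violate the ``only if'' direction) and the fact that a partial-fraction argument presupposes $F$ rational, for which your proposed fallback via deforming the inversion contour outside $|s-1|\le 1$ is a reasonable repair. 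Your cross-check against Theorem \ref{Theorem 1}\,ii) is consistent with the paper. The only caution is that your reading of ``main poles'' (all poles except an admissible simple pole at $s=0$) is an interpretation you supply; the paper never defines the term, so you should state that convention explicitly rather than attribute it to the theorem.
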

\begin{proof} Defining ${\rm{g}}\left( k \right) = f\left( {k + 1} \right)$, $1 - s = {z^{ - 1}}$ and recording $G(z)={\mathscr Z}_a\left\{ {g\left( k \right)} \right\}$, $F(s)={\mathscr N}_a\left\{ {f\left( k \right)} \right\}$, one has
\begin{equation}\label{Eq11}
\begin{array}{rl}
G\left(z\right) =&\hspace{-6pt} \sum\nolimits_{k = 0}^{ + \infty } {{z^{ - k}}g\left( k+a \right)} \\
 =&\hspace{-6pt} \sum\nolimits_{k = 0}^{ + \infty } {{z^{ - k}}f\left( {k +a+1} \right)} \\
 =&\hspace{-6pt} \sum\nolimits_{k = 1}^{ + \infty } {{z^{ - k + 1}}f\left( k+a \right)} \\
 =&\hspace{-6pt} \sum\nolimits_{k = 1}^{ + \infty } {{{\left( {1 - s} \right)}^{k - 1}}f\left( k+a \right)} \\
 =&\hspace{-6pt} F\left( s \right),
\end{array}
\end{equation}
where the $Z$-transform is defined by ${\mathscr Z}_a\left\{ {g\left( k \right)} \right\} \triangleq\sum\nolimits_{k = 0}^{ + \infty } {{z^{ - k}}g\left( k+a \right)}$.

Due to the existence and uniqueness of $Z$-transform, one can claim that the property of $g(k)$ can be summarized by $G(z)$. More specially, $g(k)$ is convergent if and only if all the main poles of $G(z)$ satisfy $|z|<1$. Therefore, the sufficient and necessary condition for the convergence of $f(k)$ is that all the main poles of $F(s)$ satisfy $|1-s|^{-1}<1$ (or $|s-1|>1$). Similarly, the main poles of $F(s)$ reflect the convergence of $f(k)$. All of these complete the proof.
\end{proof}

By referring to the $Z$-transform theory, one has the following remark.
\begin{remark}\label{Remark 1}
Let $f: \mathbb{N}_{a+1}\to \mathbb{R}$ and $F(s)={\mathscr N}_a\{f(k)\}$. When some of the main poles of $F(s)$ satisfy $|s-1|<1$, $f(k)$ is divergent with respect to $k$. When all the main poles of $F(s)$ satisfy $|s-1|=1$, $f(k)$ is constant, oscillating or divergent with respect to $k$.
\end{remark}

If we denote $p^{\overline{q}} = \frac{{\Gamma \left( {p +q } \right)}}{{\Gamma \left( {p} \right)}}$ with $p\in \mathbb{N}$, $q\in\mathbb{R}$, the discrete Mittag--Leffler function can be rewritten as
\begin{eqnarray}\label{Eq12}
{\textstyle
\begin{array}{l}
{{\mathcal F}_{\alpha ,\beta }}\left( {\lambda ,k,a} \right) \triangleq  \sum\nolimits_{j = 0}^{+ \infty}  {\frac{{{\lambda ^j}}}{{\Gamma \left( {j\alpha  + \beta } \right)}} \left( k-a\right)^{\overline{j\alpha  + \beta -1}}},
\end{array}}
\end{eqnarray}
which plays the same role in discrete fractional order systems as the continuous Mittag--Leffler function does in continuous fractional order systems \cite{Gorenflo:2014Book,Li:2018SSRN,Zhang:2015NAHS}. The following properties of ${\mathcal F}_{\alpha,\beta}\left( {\lambda ,k,a} \right) $ (see Theorem \ref{Theorem 4}) will be useful in our analysis later.

\begin{theorem}\label{Theorem 3}
If $n-1<\alpha<n$, $n\in\mathbb{Z}_+$, $\beta>0$, $m\in\mathbb{N}$, $m<\beta$, $\gamma>0$, $\kappa\in\mathbb{N}$, $\kappa<n$ and $k\in\mathbb{N}_a$, then
\begin{enumerate}[i)]
  \item ${\mathscr N}_a\big\{ {{{\mathcal F}_{\alpha ,\beta }}\left( {\lambda ,k,a} \right)} \big\} = \frac{{{s^{\alpha  - \beta }}}}{{{s^\alpha } - \lambda }}$, when $\left|\lambda  \right| < \left| s \right|^\alpha$;
  \item ${\nabla ^m}{{\mathcal F}_{\alpha ,\beta }}\left( {\lambda ,k,a} \right) = {{\mathcal F}_{\alpha ,\beta  - m}}\left( {\lambda ,k,a} \right)$;
  \item ${{}_a^{}\nabla_k ^{-\gamma} }{{\mathcal F}_{\alpha ,\beta }}\left( \lambda ,k,a \right) = {{\mathcal F}_{\alpha  ,\beta+ \gamma }}\left( \lambda ,k,a \right)$; and
  \item ${{}_a^{}\nabla_k ^\alpha }{{\mathcal F}_{\alpha ,\kappa+1 }}\left( {\lambda ,k,a} \right) = \lambda {{\mathcal F}_{\alpha ,\kappa+1 }}\left( {\lambda ,k,a} \right)$.
\end{enumerate}
\end{theorem}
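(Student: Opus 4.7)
The plan is to handle the four parts in order, with (ii), (iii), (iv) all bootstrapping off of (i). For (i), I would substitute the series definition of $\mathcal{F}_{\alpha,\beta}$ into $\mathscr{N}_a$ and swap the sum over $j$ with the sum defining the transform; absolute convergence inside the annulus $|\lambda|<|s|^{\alpha}$ makes this legitimate. The core building block is the elementary identity $\mathscr{N}_a\bigl\{(k-a)^{\overline{q-1}}/\Gamma(q)\bigr\}=s^{-q}$, which follows directly from the generalized binomial series. Each term then contributes $\lambda^{j}s^{-(j\alpha+\beta)}$, and the geometric sum $s^{-\beta}\sum_{j}(\lambda s^{-\alpha})^{j}$ collapses to $s^{\alpha-\beta}/(s^{\alpha}-\lambda)$.

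Part (ii) is a termwise application of $\nabla$, using the rising-factorial shift $\nabla\bigl[(k-a)^{\overline{q}}/\Gamma(q+1)\bigr]=(k-a)^{\overline{q-1}}/\Gamma(q)$, which is a direct consequence of $\Gamma(q+1)=q\Gamma(q)$; iterating $m$ times sends every $\beta$ to $\beta-m$. Part (iii) I would prove by transform: the standard composition rule $\mathscr{N}_a\{{}_a\nabla_k^{-\gamma}f\}=s^{-\gamma}F(s)$ combined with (i) yields $s^{-\gamma}\cdot s^{\alpha-\beta}/(s^{\alpha}-\lambda)=s^{\alpha-(\beta+\gamma)}/(s^{\alpha}-\lambda)$, which (i) again identifies with $\mathscr{N}_a\{\mathcal{F}_{\alpha,\beta+\gamma}\}$; uniqueness of the inverse $N$-transform closes the argument.

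Part (iv) is the main obstacle. I would split the Caputo difference as ${}_a\nabla_k^{\alpha}={}_a\nabla_k^{\alpha-n}\nabla^{n}$, take $\mathscr{N}_a$, and apply (\ref{Eq7}) to obtain
\[\mathscr{N}_a\bigl\{{}_a\nabla_k^{\alpha}\mathcal{F}_{\alpha,\kappa+1}\bigr\}=s^{\alpha}F(s)-\sum_{j=0}^{n-1}s^{\alpha-j-1}\nabla^{j}\mathcal{F}_{\alpha,\kappa+1}(\lambda,a,a),\]
where $F(s)=s^{\alpha-\kappa-1}/(s^{\alpha}-\lambda)$ by (i) and $\nabla^{j}\mathcal{F}_{\alpha,\kappa+1}(\lambda,k,a)=\mathcal{F}_{\alpha,\kappa+1-j}(\lambda,k,a)$ by (ii). The delicate step is evaluating the boundary values $\mathcal{F}_{\alpha,\kappa+1-j}(\lambda,a,a)$: in the defining series, every summand with index $i\ge 1$ vanishes because the exponent $i\alpha+\kappa-j$ is strictly positive (using $\alpha>n-1\ge\kappa$ and $0\le j\le n-1$), while the $i=0$ contribution is $(0)^{\overline{\kappa-j}}/\Gamma(\kappa+1-j)$. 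Handling this last piece via the limit $\Gamma(\epsilon+q)/\Gamma(\epsilon)$ as $\epsilon\to 0^{+}$ shows that for $j<\kappa$ the rising factorial vanishes, for $j>\kappa$ the Gamma pole in the denominator kills the ratio, and only $j=\kappa$ survives with value $1$. Substituting back, the transform of the left-hand side telescopes to $\lambda s^{\alpha-\kappa-1}/(s^{\alpha}-\lambda)=\lambda\mathscr{N}_a\{\mathcal{F}_{\alpha,\kappa+1}\}$; inverting yields the claim. The only real subtlety is this boundary-value computation, which is where I expect the work to lie.
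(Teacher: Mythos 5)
Your proposal is correct, and for parts i) and ii) it essentially matches the paper: the paper also transforms the series termwise and sums the geometric series for i), and proves ii) by termwise differencing (it does the $m$-th difference in one shot via a Vandermonde-type binomial convolution, whereas you iterate the first-order shift $\nabla\bigl[(k-a)^{\overline{q}}/\Gamma(q+1)\bigr]=(k-a)^{\overline{q-1}}/\Gamma(q)$ --- equivalent). Where you genuinely diverge is iii) and iv). The paper stays entirely in the time domain: iii) is proved by inserting the defining sum (\ref{Eq1}) into the series and collapsing a product of binomial coefficients by Vandermonde's identity, and iv) is proved by splitting off the $j=0$ term of the series (which $\nabla^{n}$ annihilates because $\kappa<n$, as in (\ref{Eq17})) and composing ${}_a\nabla_k^{\alpha-n}\nabla^{n}$ termwise on the remaining monomials, then re-indexing to pull out the factor $\lambda$. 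You instead push both parts through the $N$-transform, which buys a shorter argument but leans on two ingredients the paper never states: the composition rule ${\mathscr N}_a\{{}_a\nabla_k^{-\gamma}f\}=s^{-\gamma}F(s)$ (standard, e.g.\ in the cited Atici--Eloe framework, but worth citing or proving) and uniqueness of the inverse $N$-transform; your transform formula for the Caputo difference then follows from this rule together with (\ref{Eq7}), and your boundary-value analysis is the correct crux --- only $j=\kappa$ survives with value $1$, the $i\ge1$ terms dying because $i\alpha+\kappa-j>0$ and the $i=0$ term for $j\ne\kappa$ dying in the Gamma limits (equivalently, for $j>\kappa$ it is a degree-$\kappa$ polynomial killed outright by $\nabla^{j}$, which also sidesteps the fact that ii) as stated requires $m<\beta$ and so does not literally cover $j>\kappa$). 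In short: the paper's route is self-contained series manipulation with heavier binomial identities; yours is a cleaner operational-calculus route at the cost of an auxiliary transform lemma and Gamma-limit conventions, the latter being no less rigorous than the paper's own computation in (\ref{Eq42}).
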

\begin{proof}
i) Applying the $N$-transform to  (\ref{Eq12}) yields
\begin{equation}\label{Eq13}
\begin{array}{rl}
{\mathscr N}_a\big\{ {{{\mathcal F}_{\alpha ,\beta }}\left( {\lambda ,k,a} \right)} \big\}
 =&\hspace{-6pt} \sum\nolimits_{j = 0}^{+ \infty}  {\frac{{{\lambda ^j}}}{{\Gamma \left( {j\alpha  + \beta } \right)}}{\mathscr N}_a\big\{ \left(k-a\right)^{\overline{j\alpha  + \beta -1}}\big\}}\\
 =&\hspace{-6pt} \sum\nolimits_{j = 0}^{ + \infty } {\frac{{{\lambda ^j}}}{{\Gamma \left( {j\alpha  + \beta } \right)}}\sum\nolimits_{k = 1}^{ + \infty } {{{\left( {1 - s} \right)}^{k - 1}}{{\left( {k - a + a} \right)}^{\overline {j\alpha  + \beta  - 1} }}} } \\
 =&\hspace{-6pt} \sum\nolimits_{j = 0}^{ + \infty } {{\lambda ^j}\sum\nolimits_{k = 1}^{ + \infty } {{{\left( {1 - s} \right)}^{k - 1}}\frac{{{k^{\overline {j\alpha  + \beta  - 1} }}}}{{\Gamma \left( {j\alpha  + \beta } \right)}}} } \\
 =&\hspace{-6pt} \sum\nolimits_{j = 0}^{ + \infty } {{\lambda ^j}\sum\nolimits_{k = 1}^{ + \infty } {{{\left( {1 - s} \right)}^{k - 1}}\frac{{\Gamma \left( {k + j\alpha  + \beta  - 1} \right)}}{{\Gamma \left( {j\alpha  + \beta } \right)\Gamma \left( k \right)}}} } \\
 =&\hspace{-6pt} \sum\nolimits_{j = 0}^{ + \infty } {{\lambda ^j}\sum\nolimits_{k = 0}^{ + \infty } {{{\left( {1 - s} \right)}^k}\frac{{\Gamma \left( {k + j\alpha  + \beta } \right)}}{{\Gamma \left( {j\alpha  + \beta } \right)\Gamma \left( {k + 1} \right)}}} } \\
 =&\hspace{-6pt} \sum\nolimits_{j = 0}^{ + \infty } {{\lambda ^j}\sum\nolimits_{k = 0}^{ + \infty } {{{\left( {s - 1} \right)}^k}\frac{{\Gamma \left( {1 - j\alpha  - \beta } \right)}}{{\Gamma \left( {1 - j\alpha  - \beta  - k} \right)\Gamma \left( {k + 1} \right)}}} } \\
 =&\hspace{-6pt} \sum\nolimits_{j = 0}^{ + \infty } {{\lambda ^j}\sum\nolimits_{k = 0}^{ + \infty } {{{\left( {s - 1} \right)}^k}\big( {\begin{smallmatrix}
{ - j\alpha  - \beta }\\
k
\end{smallmatrix}} \big)} } \\
 =&\hspace{-6pt} \sum\nolimits_{j = 0}^{ + \infty } {{\lambda ^j}{{\left( {1 + s - 1} \right)}^{ - j\alpha  - \beta }}} \\
 =&\hspace{-6pt} \sum\nolimits_{j = 0}^{ + \infty } {\frac{{{\lambda ^j}}}{{{s^{j\alpha  + \beta }}}}} \\
 =&\hspace{-6pt} \frac{1}{{{s^\beta }}}\frac{1}{{1 - \frac{\lambda }{{{s^\alpha }}}}}\\
 =&\hspace{-6pt} \frac{{{s^{\alpha  - \beta }}}}{{{s^\alpha } - \lambda }},
\end{array}
\end{equation}
where $\left|\lambda  \right| < \left| s \right|^\alpha$ is adopted.

ii) Applying the backward difference operation (\ref{Eq4}) to the discrete Mittag--Leffler function (\ref{Eq12}) yields
\begin{equation}\label{Eq14}
\begin{array}{rl}
{\nabla ^m}{{\mathcal F}_{\alpha ,\beta }}\left( {\lambda ,k,a} \right) =&\hspace{-6pt} \sum\nolimits_{j = 0}^{ + \infty } {{\lambda ^j}{\nabla ^m}\frac{{{{\left( {k - a} \right)}^{\overline {j\alpha  + \beta  - 1} }}}}{{\Gamma \left( {j\alpha  + \beta } \right)}}} \\
 =&\hspace{-6pt} \sum\nolimits_{j = 0}^{ + \infty } {{\lambda ^j}\sum\nolimits_{i = 0}^m {{{\left( { - 1} \right)}^i}\big(\begin{smallmatrix}
m\\
i
\end{smallmatrix}\big)} \frac{{{{\left( {k - a - i} \right)}^{\overline {j\alpha  + \beta  - 1} }}}}{{\Gamma \left( {j\alpha  + \beta } \right)}}} \\
 =&\hspace{-6pt} \sum\nolimits_{j = 0}^{ + \infty } {{\lambda ^j}\sum\nolimits_{i = 0}^m {{{\left( { - 1} \right)}^i}\big(\begin{smallmatrix}
m\\
i
\end{smallmatrix}\big)} \big(\begin{smallmatrix}
{k - a - i + j\alpha  + \beta  - 2}\\
{k - a - i - 1}
\end{smallmatrix}\big)} \\
 =&\hspace{-6pt} \sum\nolimits_{j = 0}^{ + \infty } {{\lambda ^j}\sum\nolimits_{i = 0}^m {\big(\begin{smallmatrix}
{ - m + i}\\
i
\end{smallmatrix}\big)} \big(\begin{smallmatrix}
{k - a - i + j\alpha  + \beta  - 2}\\
{k - a - i - 1}
\end{smallmatrix}\big)} \\
 =&\hspace{-6pt} \sum\nolimits_{j = 0}^{ + \infty } {{\lambda ^j}\big(\begin{smallmatrix}
{k - a + j\alpha  + \beta  - m - 2}\\
{k - a - 1}
\end{smallmatrix}\big)} \\
 =&\hspace{-6pt} \sum\nolimits_{j = 0}^{ + \infty } {\frac{{{\lambda ^j}}}{{\Gamma \left( {j\alpha  + \beta  - m} \right)}}} {\left( {k - a} \right)^{\overline {j\alpha  + \beta  - m - 1} }}\\
 =&\hspace{-6pt} {\nabla ^m}{{\mathcal F}_{\alpha ,\beta  - m}}\left( {\lambda ,k,a} \right).
\end{array}
\end{equation}

iii) With the help of (\ref{Eq1}), one has
\begin{equation}\label{Eq15}
\begin{array}{rl}
{}_a^{}\nabla _k^{ - \gamma }{{\mathcal F}_{\alpha ,\beta }}\left( {\lambda ,k,a} \right) =&\hspace{-6pt} \sum\nolimits_{j = 0}^{ + \infty } {{\lambda ^j}_a^{}\nabla _k^{ - \gamma }\frac{{{{\left( {k - a} \right)}^{\overline {j\alpha  + \beta  - 1} }}}}{{\Gamma \left( {j\alpha  + \beta } \right)}}} \\
 =&\hspace{-6pt} \sum\nolimits_{j = 0}^{ + \infty } {{\lambda ^j}\sum\nolimits_{i = 0}^{n - a - 1} {{{\left( { - 1} \right)}^i}\big(\begin{smallmatrix}
{ - \gamma }\\
i
\end{smallmatrix}\big)\frac{{{{\left( {k - i - a} \right)}^{\overline {j\alpha  + \beta  - 1} }}}}{{\Gamma \left( {j\alpha  + \beta } \right)}}} } \\
 =&\hspace{-6pt} \sum\nolimits_{j = 0}^{ + \infty } {{\lambda ^j}\sum\nolimits_{i = 0}^{n - a - 1} {{{\left( { - 1} \right)}^i}\big(\begin{smallmatrix}
{ - \gamma }\\
i
\end{smallmatrix}\big)\frac{{\Gamma \left( {k - i - a + j\alpha  + \beta  - 1} \right)}}{{\Gamma \left( {j\alpha  + \beta } \right)\Gamma \left( {k - i - a} \right)}}} } \\
 =&\hspace{-6pt} \sum\nolimits_{j = 0}^{ + \infty } {{\lambda ^j}\sum\nolimits_{i = 0}^{n - a - 1} {{{\left( { - 1} \right)}^{k - a - 1}}\big(\begin{smallmatrix}
{ - \gamma }\\
i
\end{smallmatrix}\big)\frac{{\Gamma \left( { - j\alpha  - \beta  + 1} \right)}}{{\Gamma \left( { - k + i + a - j\alpha  - \beta  + 2} \right)\Gamma \left( {k - i - a} \right)}}} } \\
 =&\hspace{-6pt} \sum\nolimits_{j = 0}^{ + \infty } {{\lambda ^j}{{\left( { - 1} \right)}^{k - a - 1}}\sum\nolimits_{i = 0}^{k - a - 1} {\big(\begin{smallmatrix}
{ - \gamma }\\
i
\end{smallmatrix}\big)\big(\begin{smallmatrix}
{ - j\alpha  - \beta }\\
{k - i - a - 1}
\end{smallmatrix}\big)} } \\
 =&\hspace{-6pt} \sum\nolimits_{j = 0}^{ + \infty } {{\lambda ^j}{{\left( { - 1} \right)}^{k - a - 1}}\big(\begin{smallmatrix}
{ - j\alpha  - \beta  - \gamma }\\
{k - a - 1}
\end{smallmatrix}\big)} \\
 =&\hspace{-6pt} \sum\nolimits_{j = 0}^{ + \infty } {{\lambda ^j}\frac{{{{\left( {k - a} \right)}^{\overline {j\alpha  + \beta  + \gamma  - 1} }}}}{{\Gamma \left( {j\alpha  + \beta  + \gamma } \right)}}} \\
 =&\hspace{-6pt} {{\mathcal F}_{\alpha ,\beta  + \gamma }}\left( {\lambda ,k,a} \right).
\end{array}
\end{equation}

iv) By using (\ref{Eq3}) and (\ref{Eq12}), it obtains
\begin{equation}\label{Eq16}
\begin{array}{rl}
{{}_a^{}\nabla_k ^\alpha }{{\mathcal F}_{\alpha ,\kappa+1}}\left( {\lambda ,k,a} \right) =&\hspace{-6pt} {{}_a^{}\nabla_k ^{ \alpha-n }}{\nabla ^n}{{\mathcal F}_{\alpha ,\kappa+1}}\left( {\lambda ,k,a} \right)\\
 =&\hspace{-6pt} \sum\nolimits_{j = 0}^{+ \infty}  {{\lambda ^j}{{}_a^{}\nabla_k ^{ \alpha-n }}{\nabla ^n}{\frac{{{{\left( {k - a} \right)}^{\overline {j\alpha  + \kappa} }}}}{{\Gamma \left( {j\alpha  + \kappa +1} \right)}}}}.
\end{array}
\end{equation}

Since $\kappa\in\mathbb{N}$ and $\kappa<n$, one has
\begin{equation}\label{Eq17}
{\textstyle
\begin{array}{rl}
{\nabla ^n}{\frac{{{{\left( {k - a} \right)}^{\overline {\kappa  } }}}}{{\Gamma \left( {\kappa+1 } \right)}}}
= &\hspace{-6pt} {\nabla ^{n-\kappa}}{\nabla ^{\kappa}}{\frac{{{{\left( {k - a} \right)}^{\overline {\kappa  } }}}}{{\Gamma \left( {\kappa+1 } \right)}}}\\
=&\hspace{-6pt}{\nabla ^{n-\kappa}}1 = 0.
\end{array}}
\end{equation}
Furthermore, it follows that
\begin{equation}\label{Eq18}
\begin{array}{rl}
{{}_a^{}\nabla_k ^\alpha }{{\mathcal F}_{\alpha ,\kappa}}\left( {\lambda ,k,a} \right) =&\hspace{-6pt} \sum\nolimits_{j = 1}^{+ \infty}  {{\lambda ^j}{{}_a^{}\nabla_k ^{ \alpha-n }} {\frac{{{{\left( {k - a} \right)}^{\overline {j\alpha -n + \kappa} }}}}{{\Gamma \left( {j\alpha -n + \kappa +1} \right)}}}} \\
 =&\hspace{-6pt} \sum\nolimits_{j = 1}^{+ \infty}  {{\lambda ^j} {\frac{{{{\left( {k - a} \right)}^{\overline {j\alpha-\alpha  + \kappa} }}}}{{\Gamma \left( {j\alpha -\alpha + \kappa +1} \right)}}}}\\
 =&\hspace{-6pt} \lambda\sum\nolimits_{j = 0}^{+ \infty}  {{\lambda ^j} {\frac{{{{\left( {k - a} \right)}^{\overline {j\alpha  + \kappa} }}}}{{\Gamma \left( {j\alpha + \kappa +1} \right)}}}} \\
 =&\hspace{-6pt} \lambda {{\mathcal F}_{\alpha ,\kappa}}\left( {\lambda ,k,a} \right),
\end{array}
\end{equation}
which is exactly the statement iv).
\end{proof}

\subsection{Time-domain response analysis}
In this subsection, we will use the properties obtained in Section 3.1 to analyze the time-domain response of a nabla discrete fractional order system. To be specific, we first present an explicit form equipped by discrete Mittag-Leffler functions of this response (see Theorem \ref{Theorem 3}). Then we analyze the time-domain response under zero input condition systematically and rigorously (see Theorem \ref{Theorem 4}).

\begin{theorem}\label{Theorem 4}
If $n - 1 < \alpha  < n, ~n\in\mathbb{Z}_+$, then time--domain response of the discrete fractional order system
\begin{equation}\label{Eq19}
{{}_a^{}{\nabla}_k^\alpha }y\left( k \right)  = \lambda y\left( k \right)+u\left(k \right),~\lambda\neq 1,
\end{equation}
with $y(k)\in \mathbb{R}$, $u(k)\in \mathbb{R}$, and initial conditions
\begin{equation}\label{Eq20}
{\left. {{\nabla}^\kappa }y\left( k \right)  \right|_{k = a}} = {b_\kappa},~\kappa = 0,1, \cdots ,n - 1,
\end{equation}
is unique and is given by
\begin{equation}\label{Eq21}
\begin{array}{rl}
y\left( k \right) =&\hspace{-6pt}\sum\nolimits_{\kappa  = 0}^{n - 1} {{b_\kappa }{{\mathcal F}_{\alpha ,\kappa  + 1}}\left( {\lambda ,k,a} \right)}  + \sum\nolimits_{\tau  = a + 1}^k {{{\mathcal F}_{\alpha ,\alpha }}\left( {\lambda ,\tau ,a} \right)u\left( {k - \tau  + a + 1} \right)}.
\end{array}
\end{equation}
\end{theorem}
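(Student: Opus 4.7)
The plan is to apply the nabla discrete Laplace transform (the $N$-transform) to both sides of (\ref{Eq19}), solve the resulting algebraic equation for $Y(s)=\mathscr{N}_a\{y(k)\}$, and invert term by term using Theorem \ref{Theorem 3}.i). First, I would compute the transform of the Caputo difference on the left. By the definition (\ref{Eq3}) together with (\ref{Eq7}) and the shift rule $\mathscr{N}_a\{{}_a\nabla_k^{-\gamma}f(k)\}=s^{-\gamma}F(s)$ (which is the $\lambda=0$ specialisation of Theorem \ref{Theorem 3}.i) and is a standard identity), one obtains
\begin{equation*}
\mathscr{N}_a\bigl\{{}_a\nabla_k^{\alpha}y(k)\bigr\}
= s^{\alpha-n}\Bigl[s^{n}Y(s)-\sum_{j=0}^{n-1}s^{\,n-j-1}\nabla^{j}y(a)\Bigr]
= s^{\alpha}Y(s)-\sum_{\kappa=0}^{n-1}s^{\,\alpha-\kappa-1}b_\kappa,
\end{equation*}
after substituting the initial conditions (\ref{Eq20}).

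Second, transforming (\ref{Eq19}) and solving algebraically yields
\begin{equation*}
Y(s)=\frac{U(s)}{s^{\alpha}-\lambda}+\sum_{\kappa=0}^{n-1}b_\kappa\,\frac{s^{\alpha-\kappa-1}}{s^{\alpha}-\lambda}.
\end{equation*}
By Theorem \ref{Theorem 3}.i), the $\kappa$-th summand on the right is $\mathscr{N}_a\{\mathcal{F}_{\alpha,\kappa+1}(\lambda,k,a)\}$, and the kernel $\frac{1}{s^{\alpha}-\lambda}=\frac{s^{\alpha-\alpha}}{s^{\alpha}-\lambda}$ is the transform of $\mathcal{F}_{\alpha,\alpha}(\lambda,k,a)$. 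Inverting the initial-condition sum gives the first half of (\ref{Eq21}) at once. Inverting the product $U(s)\,\frac{1}{s^{\alpha}-\lambda}$ produces, via the nabla convolution theorem for $\mathscr{N}_a$, the sum $\sum_{\tau=a+1}^{k}\mathcal{F}_{\alpha,\alpha}(\lambda,\tau,a)\,u(k-\tau+a+1)$, matching the second half of (\ref{Eq21}).

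Third, uniqueness is a direct consequence of the injectivity of $\mathscr{N}_a$: by the one-to-one correspondence with the $Z$-transform exhibited in the proof of Theorem \ref{Theorem 2} (equation (\ref{Eq11})), any two candidate solutions of (\ref{Eq19})--(\ref{Eq20}) produce the same $Y(s)$ on a common region of convergence and must therefore coincide on $\mathbb{N}_{a+1}$.

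The main obstacle I anticipate is the convolution step, since the paper has not displayed a convolution theorem in the index convention where $k\in\mathbb{N}_{a+1}$ and the shifting factor is $(1-s)^{k-1}$; one has to establish (or explicitly quote from \cite{Atici:2009EJQTDE}) that $\mathscr{N}_a^{-1}\{F(s)G(s)\}(k)=\sum_{\tau=a+1}^{k}f(\tau)\,g(k-\tau+a+1)$ and then verify that the index ranges match the summation in (\ref{Eq21}). A secondary technical point is the convergence constraint $|\lambda|<|s|^{\alpha}$ needed by Theorem \ref{Theorem 3}.i): this is handled by performing the manipulations in a right neighbourhood of $s=1$ where the $N$-transforms converge, and then extending the pointwise equality in $k$ by the uniqueness of the inverse transform.
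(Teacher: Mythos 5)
Your route is genuinely different from the paper's. The paper works entirely in the time domain: it rewrites (\ref{Eq19}) as the equivalent sum equation $y(k)=y_0(k)+\lambda\,{}_a\nabla_k^{-\alpha}y(k)+{}_a\nabla_k^{-\alpha}u(k)$ with $y_0(k)=\sum_{\kappa=0}^{n-1}\frac{b_\kappa}{\Gamma(\kappa+1)}(k-a)^{\overline{\kappa}}$, runs a Picard-type iteration (\ref{Eq23})--(\ref{Eq24}), passes to the limit to obtain the series that is recognized as (\ref{Eq21}), verifies by direct substitution (\ref{Eq26}) that this candidate satisfies the equation, and proves uniqueness by showing that the difference $\epsilon(k)$ of two solutions satisfies $\epsilon(k)=\lambda\,{}_a\nabla_k^{-\alpha}\epsilon(k)$, whence $\epsilon(a+1)=\lambda\epsilon(a+1)$, $\epsilon(a+2)=\lambda\epsilon(a+2)$, \dots, forcing $\epsilon\equiv 0$ because $\lambda\neq 1$. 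Your transform-plus-inversion scheme is the standard alternative (in the spirit of the cited works of Abdeljawad and Mohan) and, when completed, gives the same formula with less series bookkeeping; the paper's iteration buys freedom from any transform-existence or region-of-convergence hypotheses and makes the role of $\lambda\neq1$ completely transparent.

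However, as written your argument has two genuine gaps beyond the convolution theorem you already flag. First, existence: applying $\mathscr{N}_a$ to (\ref{Eq19}) presupposes that a solution exists and has a convergent transform; to conclude that (\ref{Eq21}) actually solves (\ref{Eq19})--(\ref{Eq20}) you must either argue that every step is reversible on a common region of convergence (problematic when $|\lambda|\ge 1$, since Theorem \ref{Theorem 3}.i) is only available where $|\lambda|<|s|^{\alpha}$, which need not include a neighbourhood of $s=1$) or verify the candidate directly in the time domain, as the paper does in (\ref{Eq26}) using properties ii)--iv) of Theorem \ref{Theorem 3}. Second, uniqueness: injectivity of $\mathscr{N}_a$ only separates functions that \emph{have} transforms; a hypothetical second solution of (\ref{Eq19})--(\ref{Eq20}) is just a sequence on $\mathbb{N}_{a+1}$ and could a priori grow faster than any geometric rate, in which case its $N$-transform converges nowhere and your "common region of convergence" does not exist. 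The paper's recursive evaluation of $\epsilon(a+1),\epsilon(a+2),\dots$ avoids this entirely; if you want to keep the frequency-domain flavour, you should replace the injectivity appeal by exactly such a pointwise recursion (or first prove an a priori growth bound for any solution), since otherwise the uniqueness claim does not follow.
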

\begin{proof}
Reformulating (\ref{Eq19}) gives an equivalent fractional order sum equation as
\begin{eqnarray}\label{Eq22}
{\textstyle
\begin{array}{rl}
{}_a^{}\nabla _k^{ - \alpha }{}_a^{}\nabla _k^\alpha y\left( k \right) =&\hspace{-6pt} y\left( k \right) - \sum\nolimits_{\kappa  = 0}^{n - 1} {\frac{{{b_\kappa }}}{{\Gamma \left( {\kappa  + 1} \right)}}{{\left( {k - a} \right)}^{\overline \kappa  }}} \\
 =&\hspace{-6pt} {\lambda}{}_a^{}\nabla _k^{ - \alpha }y\left( k \right){ + }{}_a^{}\nabla _k^{ - \alpha }u\left( k \right).
\end{array}}
\end{eqnarray}

Define ${y_0}\left( k \right) = \sum\nolimits_{\kappa  = 0}^{n - 1} {\frac{{{b_\kappa }}}{{\Gamma \left( {\kappa  + 1} \right)}}{{\left( {k - a} \right)}^{\overline \kappa  }}}$ and introduce a series of functions ${y_p}\left( k \right),~p \in \mathbb{Z}_+$ by the following relationship
\begin{equation}\label{Eq23}
{\textstyle
{y_{p}}\left( k\right) = {y_0}\left( k \right) + {\lambda}{}_a^{}\nabla _k^{ - \alpha }y_{p-1}\left( k \right){ + }{}_a^{}\nabla _k^{ - \alpha }u\left( k \right).}
\end{equation}

Following formulas (\ref{Eq22}) and (\ref{Eq23}), one obtains
\begin{eqnarray}\label{Eq24}
{\textstyle
{y_p}\left( k \right)\hspace{-1pt} =\sum\nolimits_{j = 0}^{p} {{\lambda ^j}{{}_a^{}\nabla _k^{ - j\alpha }}{y_0}\left( k \right)}  + \sum\nolimits_{j = 0}^{p - 1} {{\lambda ^j}{{}_a^{}\nabla _k^{ - (j+1)\alpha }}u\left( k \right)} .}
\end{eqnarray}

Taking the limit of (\ref{Eq24}) and recalling the definition of the fractional order sum (\ref{Eq1}), yields
\begin{eqnarray}\label{Eq25}
\begin{array}{rl}
y\left( k \right) =&\hspace{-6pt} \mathop {\lim }\limits_{p \to  + \infty } {y_p}\left( k \right)\\
 =&\hspace{-6pt} \sum\nolimits_{j = 0}^{ + \infty } {{\lambda ^j}_a^{}\nabla _k^{ - j\alpha }{y_0}\left( k \right)}  + \sum\nolimits_{j = 0}^{ + \infty } {{\lambda ^j}_a^{}\nabla _k^{ - (j + 1)\alpha }u\left( k \right)} \\
 =&\hspace{-6pt} \sum\nolimits_{j = 0}^{ + \infty } {{\lambda ^j}\sum\nolimits_{\kappa  = 0}^{n - 1} {\frac{{{b_\kappa }}}{{\Gamma \left( {j\alpha  + \kappa  + 1} \right)}}{{\left( {k - a} \right)}^{\overline {j\alpha  + \kappa } }}} }  + \sum\nolimits_{j = 0}^{ + \infty } {{\lambda ^j}\sum\nolimits_{\tau  = 0}^{k - a - 1} {\frac{{{{\left( {\tau  + 1} \right)}^{\overline {j\alpha  + \alpha  - 1} }}}}{{\Gamma \left( {j\alpha  + \alpha } \right)}}u\left( {k - \tau } \right)} } \\
 =&\hspace{-6pt} \sum\nolimits_{\kappa  = 0}^{n - 1} {{b_\kappa }\sum\nolimits_{j = 0}^{ + \infty } {\frac{{{\lambda ^j}}}{{\Gamma \left( {j\alpha  + \kappa  + 1} \right)}}} } {\left( {k - a} \right)^{\overline {j\alpha  + \kappa } }} + \sum\nolimits_{\tau  = a + 1}^k {u\left( {k - \tau  + a + 1} \right)\sum\nolimits_{j = 0}^{ + \infty } {\frac{{{\lambda ^j}}}{{\Gamma \left( {j\alpha  + \alpha } \right)}}{{\left( {\tau  - a} \right)}^{\overline {j\alpha  + \alpha  - 1} }}} } \\
 =&\hspace{-6pt} \sum\nolimits_{\kappa  = 0}^{n - 1} {{b_\kappa }{{\mathcal F}_{\alpha ,\kappa  + 1}}\left( {\lambda ,k,a} \right)}  + \sum\nolimits_{\tau  = a + 1}^k {{{\mathcal F}_{\alpha ,\alpha }}\left( {\lambda ,\tau ,a} \right)u\left( {k - \tau  + a + 1} \right)}.
\end{array}
\end{eqnarray}
By taking fractional difference to $y(k)$ in (\ref{Eq25}), one has
\begin{equation}\label{Eq26}
{\textstyle \begin{array}{rl}
{}_a^{}\nabla _k^\alpha y\left( k \right) =&\hspace{-6pt} {}_a^{}\nabla _k^\alpha \big[ {\sum\nolimits_{j = 0}^{ + \infty } {{\lambda ^j}{}_a^{}\nabla _k^{ - j\alpha }{y_0}\left( k \right)}  + \sum\nolimits_{j = 0}^{ + \infty } {{\lambda ^j}_a^{}\nabla _k^{ - (j + 1)\alpha }u\left( k \right)} } \big]\\
 =&\hspace{-6pt} \sum\nolimits_{j = 0}^{ + \infty } {{\lambda ^j}{}_a^{}\nabla _k^{ - j\alpha  + \alpha }{y_0}\left( k \right)}  + \sum\nolimits_{j = 0}^{ + \infty } {{\lambda ^j}{}_a^{}\nabla _k^{ - j\alpha }u\left( k \right)} \\
 =&\hspace{-6pt} \lambda \sum\nolimits_{j =  - 1}^{ + \infty } {{\lambda ^j}{}_a^{}\nabla _k^{ - j\alpha }{y_0}\left( k \right)}  + \lambda \sum\nolimits_{j =  - 1}^{ + \infty } {{\lambda ^j}{}_a^{}\nabla _k^{ - (j + 1)\alpha }u\left( k \right)} \\
 =&\hspace{-6pt} \lambda \big[ {\sum\nolimits_{j = 0}^{ + \infty } {{\lambda ^j}{}_a^{}\nabla _k^{ - j\alpha }{y_0}\left( k \right)}  + \sum\nolimits_{j = 0}^{ + \infty } {{\lambda ^j}{}_a^{}\nabla _k^{ - (j + 1)\alpha }u\left( k \right)} } \big] + {}_a^{}\nabla _k^\alpha {y_0}\left( k \right) + {}_a^{}\nabla _k^0u\left( k \right)\\
 =&\hspace{-6pt} \lambda y\left( k \right) + u\left( k \right),
\end{array}}
\end{equation}
which is equal to equation (\ref{Eq19}).

Now we prove the uniqueness of the response in (\ref{Eq21}). Suppose that $\bar y(k)$ is another response of the system (\ref{Eq19}) under the same initial conditions (\ref{Eq20}). Defining the function $\epsilon(k)=y(k)-\bar y(k)$, one has that ${}_a^{}\nabla _k^\alpha\epsilon\left( k \right)=\lambda\epsilon\left( k \right)$ with initial conditions ${\left. {{\nabla}^\kappa }\epsilon\left( k \right)  \right|_{k = a}} = 0,~\kappa = 0,1, \cdots ,n - 1$. Performing the fractional order sum operation, an equivalent fractional order sum equation follows
\begin{equation}\label{Eq27}
{\textstyle \begin{array}{rl}
\epsilon \left( k \right) =&\hspace{-6pt} \lambda {{}_a^{}\nabla_k ^{ - \alpha }}\epsilon \left( k \right)\\
 =&\hspace{-6pt} \frac{\lambda}{{\Gamma \left( \alpha  \right)}}\sum\nolimits_{j = 0}^{k-a-1} {\frac{{\Gamma \left( {j + \alpha} \right)}}{{\Gamma \left( j+1 \right)}}\epsilon \left( {k - j } \right).}
\end{array}}
\end{equation}

When $k=a+1$, (\ref{Eq27}) becomes $\epsilon \left( a+1 \right) =\lambda\epsilon \left( a+1 \right)$. Recalling the condition $\lambda\neq1$, one has that $\epsilon \left( a+1 \right)=0$. When $k=a+2$, one has that $\epsilon \left( a+2 \right) =\lambda\epsilon \left( a+2 \right)$, which implies $\epsilon \left( a+2 \right)=0$. As an analogy, one can conclude that $\epsilon \left( a+j \right)\equiv0,~j=1,2,\cdots$. The uniqueness of a non-zero solution is thus proved.
\end{proof}


With the obtained Theorem \ref{Theorem 4}, we are ready to analyze the zero input response of the discrete fractional order system in formula (\ref{Eq19}). 
\begin{theorem}\label{Theorem 5}
If $n-1<\alpha<n,~n\in \mathbb{Z}_+$, then the response to the fractional order system (\ref{Eq19}) with $u(k)\equiv 0$ has the following properties:
\begin{enumerate}[i)]
  \item For the case of $\lambda>0$ and $\lambda\neq1$, if $\lambda  < {2^\alpha }$, $y(k)$ is divergent; and if $\lambda  > {2^\alpha }$, $y(k)$ is convergent.
  \item For the case of $\lambda=0$, $y\left( k \right) = \sum\nolimits_{\kappa = 0}^{n - 1} {\frac{{{b_\kappa}}}{{\kappa!}}} {{(k-a)^{\overline{\kappa}}}}$ and therefore $y(k)$ might be divergent or constant.
  \item For the case of $\lambda<0$, if $0<\alpha\le1$, $y(k)$ is monotonically convergent; If $1<\alpha\le2$, $y(k)$ is convergent with a possible overshoot; If $\alpha>2$, three situations will be divided. More specially, when $\left| \lambda  \right| < 2^{\alpha } \cos ^{\alpha } \big( {\frac{\pi }{\alpha }} \big)$, $y(k)$ is divergent; and when $\left| \lambda  \right|> 2^{\alpha }\cos^{\alpha } \big( {\frac{\pi }{\alpha }} \big)$, $y(k)$ is convergent.
\end{enumerate}
\end{theorem}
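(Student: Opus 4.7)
The plan is to pass through the nabla Laplace transform. Setting $u\equiv 0$ in Theorem \ref{Theorem 4} gives the explicit response $y(k)=\sum_{\kappa=0}^{n-1}b_\kappa{\mathcal F}_{\alpha,\kappa+1}(\lambda,k,a)$, and applying Theorem \ref{Theorem 3}i) termwise produces
\[
Y(s)=\sum_{\kappa=0}^{n-1}\frac{b_\kappa s^{\alpha-\kappa-1}}{s^\alpha-\lambda}.
\]
Hence the main poles of $Y(s)$ are the roots of $s^\alpha=\lambda$ on the principal branch, and Theorem \ref{Theorem 2} together with Remark \ref{Remark 1} reduces every claim to the location of those roots relative to the circle $|s-1|=1$. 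Case ii) is then immediate: substituting $\lambda=0$ into (\ref{Eq12}) kills all $j\ge 1$ summands and leaves ${\mathcal F}_{\alpha,\kappa+1}(0,k,a)=(k-a)^{\overline{\kappa}}/\kappa!$, giving the stated polynomial form whose asymptotics are governed by the largest $\kappa$ with $b_\kappa\neq 0$.

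For case i) with $\lambda>0$, I would track the real principal root $s_0=\lambda^{1/\alpha}$, for which $|s_0-1|=|\lambda^{1/\alpha}-1|$. This crosses unity precisely at $\lambda=2^\alpha$: below that threshold (and with $\lambda\neq 1$) we get $|s_0-1|<1$ and divergence follows, while above it each complex root $\lambda^{1/\alpha}e^{2\pi i m/\alpha}$ satisfies $|s-1|^2=\lambda^{2/\alpha}-2\lambda^{1/\alpha}\cos(2\pi m/\alpha)+1\ge(\lambda^{1/\alpha}-1)^2>1$, so convergence is global. For case iii) with $\lambda<0$, the principal root is $s_0=|\lambda|^{1/\alpha}e^{i\pi/\alpha}$ and $|s_0-1|^2=|\lambda|^{2/\alpha}-2|\lambda|^{1/\alpha}\cos(\pi/\alpha)+1$. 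For $0<\alpha\le 1$ and $1<\alpha\le 2$, $\cos(\pi/\alpha)\le 0$ forces $|s_0-1|>1$ and convergence, with the second regime producing oscillation (and hence possible overshoot) because $\mathrm{Im}(s_0)\neq 0$. For $\alpha>2$, $\cos(\pi/\alpha)>0$ and the equation $|s_0-1|=1$ rearranges to $|\lambda|^{1/\alpha}=2\cos(\pi/\alpha)$, so $|\lambda|>2^\alpha\cos^\alpha(\pi/\alpha)$ gives convergence and the reverse inequality gives divergence.

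The principal obstacle is the monotonicity claim for $0<\alpha\le 1,\lambda<0$, which does \emph{not} follow from pole location since the latter controls only asymptotic convergence, not the sign pattern of the transient. I would handle it by a direct termwise analysis of the alternating series (\ref{Eq12}), showing that consecutive summands are monotonically dominated so that the partial sums (and the limit) inherit monotonicity in $k$, in the spirit of a discrete analogue of Pollard's complete monotonicity theorem. A secondary difficulty is the multivalued nature of $s^\alpha$: one must justify that the principal-branch root is indeed the closest to $s=1$ among all admissible sheets, which amounts to a careful comparison of $\cos(\pi(2m+1)/\alpha)$ against $\cos(\pi/\alpha)$, plus separate treatment of the boundary cases $\alpha\in\{1,2\}$ and $|\lambda|\in\{2^\alpha,2^\alpha\cos^\alpha(\pi/\alpha)\}$, where Remark \ref{Remark 1} allows constant, oscillating, or divergent behavior and hence requires individual inspection.
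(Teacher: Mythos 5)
Your frequency--domain skeleton matches the paper's: both arguments write $Y(s)=\sum_{\kappa=0}^{n-1}b_\kappa s^{\alpha-\kappa-1}/(s^\alpha-\lambda)$ and reduce the convergence/divergence claims in i) and in the $\alpha>2$ part of iii) to whether $\lambda^{1/\alpha}$ lies inside or outside the circle $|s-1|=1$, invoking Theorem \ref{Theorem 2} and Remark \ref{Remark 1}; your algebraic comparison $|\lambda|^{1/\alpha}\lessgtr 2\cos(\pi/\alpha)$ is exactly the paper's geometric computation (\ref{Eq43}), and case ii) is handled identically. However, there is a genuine gap where you yourself flag the ``principal obstacle'': the monotonicity claim for $\lambda<0$, $0<\alpha\le1$ is never actually proved. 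A ``termwise analysis of the alternating series showing consecutive summands are monotonically dominated'' is not a proof plan that works as stated --- the series (\ref{Eq12}) with $\lambda<0$ has terms whose ratios depend on $k$ in a complicated way, and the continuous analogue (complete monotonicity of $E_\alpha(-x)$, Pollard) is a genuinely hard result, so appealing to its ``spirit'' leaves the claim open. The paper closes this differently and concretely: it first handles $\alpha=1$ in closed form, ${\mathcal F}_{1,1}(\lambda,k,a)=(1-\lambda)^{a-k}$; then for $0<\alpha<1$ it uses the positivity bound ${\mathcal F}_{\alpha,1}\ge{\mathcal F}_{1,1}>0$ cited from \cite{Cheng:2011Book}, the eigenfunction identity of Theorem \ref{Theorem 3} to get ${}_a\nabla_k^{\alpha}{\mathcal F}_{\alpha,1}=\lambda{\mathcal F}_{\alpha,1}<0$, and then a known sign-to-monotonicity result (Theorem 3.126 of \cite{Goodrich:2015Book}) to conclude $\nabla^1{\mathcal F}_{\alpha,1}<0$. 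Without some such ingredient your proof of statement iii) is incomplete.

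Two smaller issues. First, your justification of convergence for $0<\alpha\le1$, $\lambda<0$ via ``$\cos(\pi/\alpha)\le0$'' is false for small $\alpha$ (e.g.\ $\alpha=1/2$ gives $\cos(2\pi)=1$); the correct observation is that for $\alpha<1$ the equation $s^\alpha=\lambda<0$ has no root on the principal sheet at all (since $|\arg s^\alpha|\le\alpha\pi<\pi$), so $Y(s)$ has no main pole inside the instability disc --- a point worth stating explicitly, since the paper is also loose here. Second, for $1<\alpha\le2$ you infer ``possible overshoot'' solely from $\operatorname{Im}(s_0)\neq0$; the paper instead establishes it quantitatively, computing ${\mathcal F}_{\alpha,2}(\lambda,a+1,a)=1/(1-\lambda)>0$, ${\mathcal F}_{\alpha,2}(\lambda,a+2,a)=(1-\alpha)/(1-\lambda)<0$ and the zero final value via Theorem \ref{Theorem 1} and (\ref{Eq10}), together with a contradiction argument showing ${\mathcal F}_{\alpha,1}$ must change sign because $\sum_{k\ge1}{}_a\nabla_k^{-\sigma}{\mathcal F}_{\alpha,1}(\lambda,k+a,a)=0$. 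Your pole-based heuristic is plausible but does not by itself substantiate the sign-change behaviour that the theorem's overshoot statement rests on.
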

\begin{proof}
With the initial conditions in (\ref{Eq20}), the $N$-transform of $y(k)$ can be obtained as
\begin{equation}\label{Eq28}
{\textstyle Y\left( s \right) = \sum\nolimits_{\kappa  = 0}^{n - 1} {\frac{{{s^{\alpha  - \kappa  - 1}}}}{{{s^\alpha } - \lambda }}{b_\kappa }} } ,
\end{equation}
whose pole is ${\lambda ^{\frac{1}{\alpha }}}$.

i) When $\lambda>0$ and $\lambda\neq0$, one has ${\lambda ^{\frac{1}{\alpha }}} > 0$ for any positive $\alpha$. If $\lambda  < {2^\alpha }$, it follows ${\lambda ^{\frac{1}{\alpha }}} <2$ and then $|{\lambda ^{\frac{1}{\alpha }}}-1|<1$. By applying Remark \ref{Remark 1}, the divergence of $y(k)$ can be obtained. Similarly, if $\lambda  > {2^\alpha }$, the convergence of $y(k)$ can be achieved from Theorem \ref{Theorem 2}.

ii) When $\lambda=0$ and $u(k)\equiv0$, the response $y(k)$ in (\ref{Eq21}) becomes
  \begin{equation}\label{Eq29}
    \begin{array}{l}
    y\left( k \right)= \sum\nolimits_{\kappa = 0}^{n - 1} {\frac{{{b_\kappa}}}{{\kappa!}}} (k-a)^{\overline{\kappa}},
    \end{array}
  \end{equation}
from which the statement ii) can be directly concluded, since $(k-a)^{\overline{\kappa}}$ is positive and divergent for any $\kappa=1,2,\cdots,n-1$ and $(k-a)^{\overline{0}}=1$. In other words, if $0<\alpha<1$, $y(k)$ is constant. If $\alpha>1$ and $b_\kappa=0,\kappa>1$, $y(k)$ is also constant. If $\alpha>1$ and not all of $b_\kappa$ are zero, $y(k)$ will be divergent.

iii) Letting $\lambda<0$, one has
  \begin{equation}\label{Eq30}
{\lambda ^{\frac{1}{\alpha }}} = {\left| \lambda  \right|^{\frac{1}{\alpha }}}{{\rm{e}}^{-{\rm{j}}\frac{\pi }{\alpha }}},
  \end{equation}
  whose magnitude is ${\left| \lambda  \right|^{\frac{1}{\alpha }}}$ and phase is $-\frac{\pi }{\alpha }$. When $0<\alpha\le2$, ${\lambda ^{\frac{1}{\alpha }}} $ lies in the left half plane and the formula $|{\lambda ^{\frac{1}{\alpha }}} -1|>1$ holds. As a result, $y(k)$ is convergent in this case. When $\alpha>2$, it is difficult to determine whether $|{\lambda ^{\frac{1}{\alpha }}} -1|$ is larger than $1$. Thus, more details will be provided.

 \begin{itemize}
  \item {\bf The case of $0<\alpha\le1$}
\end{itemize}

From the previous discussion, it is known that when $\lambda<0$ and $0<\alpha\le1$, $y(k)$ will converge to $0$ as $k$ increases. Next, the monotonicity of $y(k)$ will be analyzed. In this case, $\kappa=0$, i.e., $y(k)=b_0{{{\mathcal F}_{\alpha ,1}}\left( {\lambda ,k,a} \right)} $. On one hand, when $\alpha=1$ and $\lambda<0$, one has
 \begin{equation}\label{Eq31}
  \begin{array}{rl}
{{\mathcal F}_{1,1}}\left( {\lambda ,k,a} \right) = &\hspace{-6pt}\sum\nolimits_{j = 0}^{{+ \infty}}  {{\lambda ^j}\frac{{\Gamma \left( {j + k-a} \right)}}{{\Gamma \left( {j + 1} \right)\Gamma \left( k-a\right)}}} \\
 =&\hspace{-6pt} \sum\nolimits_{j = 0}^{{+ \infty}}  {{{\left( { - \lambda } \right)}^j}\frac{{\Gamma \left( {1 - k+a} \right)}}{{\Gamma \left( {j + 1} \right)\Gamma \left( {1 - j - k+a} \right)}}} \\
 =&\hspace{-6pt} \sum\nolimits_{j = 0}^{{+ \infty}}  {{{\left( { - \lambda } \right)}^j}\big(  {\begin{smallmatrix}
{ - k}\\
j
\end{smallmatrix}} \big)} \\
 =&\hspace{-6pt} {\left( {1 - \lambda }\right)^{ - k+a}},
\end{array}
\end{equation}
which is monotonically convergent as $k$ increases.

On the other hand, it has been shown in \cite{Cheng:2011Book} that for any $0<\alpha<1$, $\lambda<0$ and $k\in\mathbb{N}_a$,
\begin{equation}\label{Eq32}
  {{{\mathcal F}_{\alpha ,1}}\left( {\lambda ,k,a} \right)}\ge{{{\mathcal F}_{1 ,1}}\left( {\lambda ,k,a} \right)}.
\end{equation}

Then from the statement iii) of Theorem \ref{Theorem 3}, one has
\begin{equation}\label{Eq33}
  {{}_a^{}\nabla_k ^\alpha }{{\mathcal F}_{\alpha ,1}}\left( {\lambda ,k,a } \right) = \lambda {{\mathcal F}_{\alpha ,1 }}\left( {\lambda ,k,a} \right)<0,
\end{equation}
which implies that ${\nabla ^1 }{{\mathcal F}_{\alpha ,1 }}\left( {\lambda ,k,a} \right)<0$ based on Theorem 3.126 of \cite{Goodrich:2015Book}. Thus, ${{\mathcal F}_{\alpha ,1 }}\left( {\lambda ,k,a} \right)$ is monotonically decreasing when $0<\alpha<1$.

Thus one can conclude that for any $0<\alpha\le1$, $y(k)$ is monotonically convergent as $k$ increases.

\begin{itemize}
  \item {\bf The case of $1<\alpha\le2$}
\end{itemize}

In this case, $y(k)=b_0{{\mathcal F}_{\alpha ,1 }}\left( {\lambda ,k,a} \right)+b_1{{\mathcal F}_{\alpha ,2 }}\left( {\lambda ,k,a} \right) $, which is convergent from the aforementioned facts. Next, the overshoot phenomenon of $y(k)$ will be discussed. Firstly, let us discuss the property of ${{\mathcal F}_{\alpha ,1 }}\left( {\lambda ,k,a} \right)$. By using the $N$-transform, we have
\begin{equation}\label{Eq34}
\begin{array}{rl}
{\mathscr N}_a\left\{ {{}_a^{}\nabla_k^ {-\sigma}{{\mathcal F}_{\alpha ,1  }}\left( {\lambda ,k,a} \right)} \right\}=&\hspace{-6pt} s{\mathscr N}_a\left\{ {{{\mathcal F}_{\alpha ,2 + \sigma }}\left( {\lambda ,k,a} \right)} \right\}\\
 = &\hspace{-6pt}\frac{{{s^{\alpha  - \sigma  - 1}}}}{{{s^\alpha } - \lambda }},
\end{array}
\end{equation}
for any $0<\sigma<\alpha-1$.

Theorem \ref{Theorem 3} indicates that
\begin{equation}\label{Eq35}
{\textstyle{{{}_a^{}\nabla_k^ {-\sigma} }{{\mathcal F}_{\alpha ,1}}\left( {\lambda ,a+1,a} \right)} = \mathop {\lim }\limits_{s \to 1} \frac{{{s^{\alpha  - \sigma  - 1}}}}{{{s^\alpha } - \lambda }} = \frac{1}{{1 - \lambda }}>0,}
\end{equation}
\begin{equation}\label{Eq36}
{\textstyle {{\mathcal F}_{\alpha ,2 + \sigma }}\left( {\lambda ,a+ \infty,a} \right) =\mathop {\lim }\limits_{s \to 0} \frac{{{s^{\alpha  - \sigma  - 1}}}}{{{s^\alpha } - \lambda }} = 0.}
\end{equation}

A further calculation on (\ref{Eq34}) leads to
\begin{equation}\label{Eq37}
\begin{array}{l}
\mathop {\lim }\limits_{s \to 0} {\mathscr N}_a\left\{ {{{}_a^{}\nabla_k^ {-\sigma} }{{\mathcal F}_{\alpha ,1}}\left( {\lambda ,k,a} \right)} \right\} \\
= \mathop {\lim }\limits_{s \to 0} \sum\nolimits_{k = 1}^{{+ \infty}}  {{{\left( {1 - s} \right)}^{k - 1}}{{}_a^{}\nabla_k^ {-\sigma} }{{\mathcal F}_{\alpha ,1}}\left( {\lambda ,k+a,a} \right)}\\
 = \sum\nolimits_{k = 1}^{{+ \infty}}  {\mathop {\lim }\limits_{s \to 0} {{\left( {1 - s} \right)}^{k - 1}}{{}_a^{}\nabla_k^ {-\sigma} }{{\mathcal F}_{\alpha ,1}}\left( {\lambda ,k+a,a} \right)} \\
 = \sum\nolimits_{k = 1}^{{+ \infty}}  {{{}_a^{}\nabla_k^ {-\sigma} }{{\mathcal F}_{\alpha ,1}}\left( {\lambda ,k+a,a} \right)}\\
  = 0,
\end{array}
\end{equation}
which implies that ${{{}_a^{}\nabla_k^ {-\sigma} }{{\mathcal F}_{\alpha ,1}}\left( {\lambda ,k,a} \right)}$ cannot be always positive. Therefore, one can conclude that
${{\mathcal F}_{\alpha ,1}}\left( {\lambda ,k,a} \right)$ should change its sign as $n$ increases based on the hypothesis method. In fact, suppose that ${\mathcal F}_{a,1}(\lambda, k,a)>0$ holds for all $k \in \mathbb{N}_a$, then ${{{}_a^{}\nabla_k^ {-\sigma} }{{\mathcal F}_{\alpha ,1}}\left( {\lambda ,k,a} \right)}>0$ holds for all $k \in \mathbb{N}_a$ and vice versa, which leads to a contradiction.

Secondly, let us consider ${{\mathcal F}_{\alpha ,2}}\left( {\lambda ,k,a} \right)$. When $\left|\lambda  \right| < \left| s \right|^\alpha$, the statement i) of Theorem \ref{Theorem 3} indicates that
\begin{equation}\label{Eq38}
{\textstyle {\mathscr N}_a\left\{ {{{\mathcal F}_{\alpha ,2 }}\left( {\lambda ,k,a} \right)} \right\} = \frac{{{s^{\alpha  - 2 }}}}{{{s^\alpha } - \lambda }}}.
\end{equation}

Based on Theorem \ref{Theorem 1}, one has
\begin{equation}\label{Eq39}
{\textstyle{{\mathcal F}_{\alpha ,2}}\left( {\lambda ,a+1,a} \right) = \mathop {\lim }\limits_{s \to 1} \frac{{{s^{\alpha  - 2}}}}{{{s^\alpha } - \lambda }} = \frac{1}{{1 - \lambda }},}
\end{equation}
\begin{equation}\label{Eq40}
{\textstyle{{\mathcal F}_{\alpha ,2}}\left( {\lambda ,a+2,a} \right) = \mathop {\lim }\limits_{s \to 1}\frac{ \frac{{{s^{\alpha  - 2}}}}{{{s^\alpha } - \lambda }}-\frac{1-s}{1-\lambda}}{1-s} = \frac{1-\alpha}{{1 - \lambda }},}
\end{equation}
\begin{equation}\label{Eq41}
{\textstyle {{\mathcal F}_{\alpha ,2}}\left( {\lambda , a+ \infty,a } \right) = \mathop {\lim }\limits_{s \to 0} s\frac{{{s^{\alpha  - 2}}}}{{{s^\alpha } - \lambda }} = 0}.
\end{equation}

Moreover, according to the property of Gamma function, one has
\begin{equation}\label{Eq42}
{\textstyle \begin{array}{rl}
{{\mathcal F}_{\alpha ,2}}\left( {\lambda ,a,a} \right) =&\hspace{-6pt} \sum\nolimits_{j = 0}^{ + \infty } {{\lambda ^j}\frac{{\Gamma \left( {j\alpha  + 1} \right)}}{{\Gamma \left( {j\alpha  + 2} \right)\Gamma \left( 0 \right)}}} \\
 =&\hspace{-6pt} \sum\nolimits_{j = 0}^{ + \infty } {\frac{{{\lambda ^j}}}{{\left( {j\alpha  + 1} \right)\Gamma \left( 0 \right)}}} \\
 =&\hspace{-6pt}0.
\end{array}}
\end{equation}

From (\ref{Eq39})-(\ref{Eq42}), an overshoot can be observed as ${{\mathcal F}_{\alpha ,2}}\left( {\lambda , k,a } \right)$ converges to zero.

In summary, if one of $b_0$ and $b_1$ equals to zero, $y(k)$ will be convergent with an overshoot. Otherwise, $y(k)$ is convergent while the overshoot may disappear for some special values of variables $b_0$ and $b_1$.
\begin{itemize}
  \item {\bf The case of $\alpha>2$}
\end{itemize}

According to (\ref{Eq30}), it can be found that ${\lambda ^{\frac{1}{\alpha }}}$ locates in the right half plane when $\alpha>2$ and $\lambda<0$. Then one only needs to judge whether the point ${\lambda ^{\frac{1}{\alpha }}}$ is in the unstable circle.
Supposing that point $B$ is on the circle and $\angle AOB = \frac{\pi }{\alpha }$ shown as Fig. \ref{Fig2}, because $A$ is the center of a unit circle, $O$ is a point on the circle, then one has
\begin{equation}\label{Eq43}
{\textstyle \begin{array}{rl}
\left| {OB} \right| =&\hspace{-6pt} 2\left| {OA} \right|\cos \left( {\angle AOB} \right)\\
 =&\hspace{-6pt} 2\cos \big( {\frac{\pi }{\alpha }} \big),
\end{array}}
\end{equation}

\begin{figure}[!htbp]
\centering
\includegraphics[width=0.65\columnwidth]{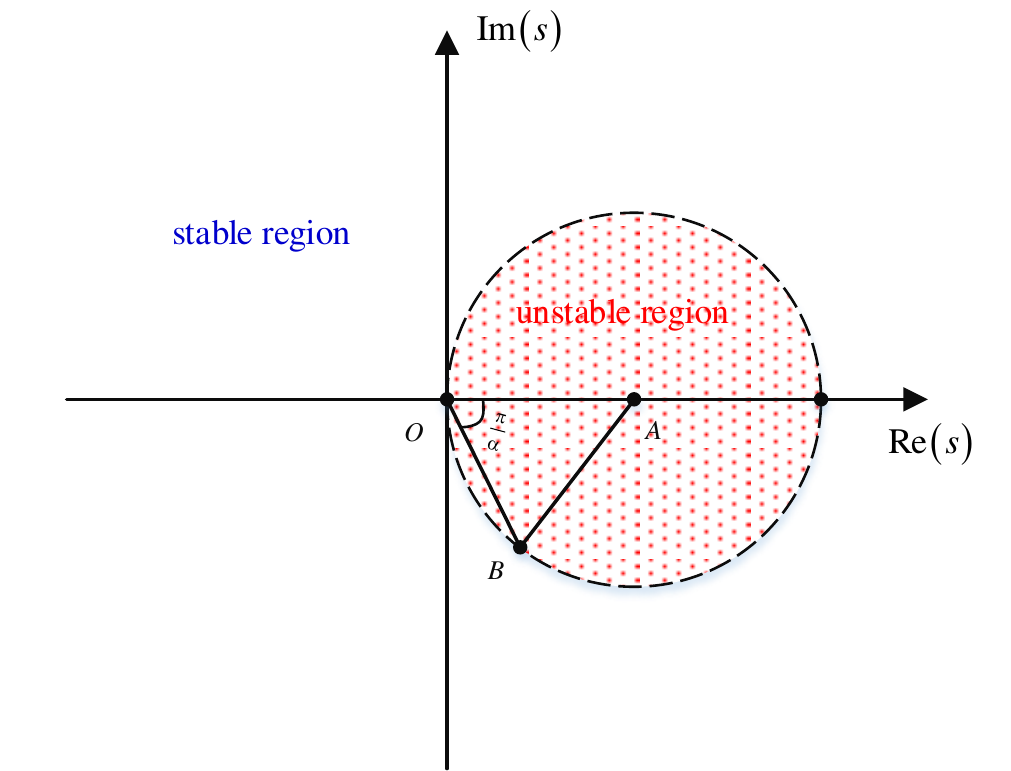}\vspace{-10pt}
\caption{The case of ${\lambda ^{\frac{1}{\alpha }}}$ lying on the circle.}\label{Fig2}
\end{figure}

If ${\left| \lambda  \right|^{\frac{1}{\alpha }}}$ is larger than $\left| {OB} \right|$, ${\lambda ^{\frac{1}{\alpha }}}$ is outside the circle. Similarly, if ${\left| \lambda  \right|^{\frac{1}{\alpha }}}$ is less than $\left| {OB} \right|$, ${\lambda ^{\frac{1}{\alpha }}}$ is in the circle. If ${\left| \lambda  \right|^{\frac{1}{\alpha }}}$ is equal to $\left| {OB} \right|$, ${\lambda ^{\frac{1}{\alpha }}}$ is on the circle. From the property of the exponential function, it can be concluded in a brief form. a) if $\left| \lambda  \right| > 2^{\alpha } \cos ^{\alpha } \big( {\frac{\pi }{\alpha }} \big)$, ${\lambda ^{\frac{1}{\alpha }}}$ is outside the circle; b) if $\left| \lambda  \right| < 2^{\alpha } \cos ^{\alpha } \big( {\frac{\pi }{\alpha }} \big)$, ${\lambda ^{\frac{1}{\alpha }}}$ is in the circle; c) if $\left| \lambda  \right| = 2^{\alpha } \cos ^{\alpha } \big( {\frac{\pi }{\alpha }} \big)$, ${\lambda ^{\frac{1}{\alpha }}}$ is on the circle. Based on the stable theory in Theorem \ref{Theorem 2} and Remark \ref{Remark 1}, the statement iii) in Theorem \ref{Theorem 5} can be derived immediately.
%
\end{proof}

One interesting proposition under critical conditions is proposed here.
\begin{proposition}  For $y\left( k \right) =\sum\nolimits_{\kappa = 0}^{n - 1} {{b_\kappa}{{\mathcal F}_{\alpha ,\kappa + 1}}\left( {\lambda ,k,a } \right)}$ with $n - 1 < \alpha  < n, ~n\in\mathbb{Z}_+$, the following conclusions can be drawn.
\begin{enumerate}[i)]
  \item When $\lambda>0$ and $\lambda  = {2^\alpha }$, if $\alpha\le1$, $y(k)$ is oscillating and if $\alpha>1$, $y(k)$ is convergent.
  \item When $\lambda<0$ and $\left| \lambda  \right| = 2^{\alpha }\cos^{\alpha } \big( {\frac{\pi }{\alpha }} \big)$, $y(k)$ is oscillating.
\end{enumerate}
 \end{proposition}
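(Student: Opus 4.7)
The strategy is to reduce both parts to the boundary regime of Theorem 2 and Remark 1, and then to decide, among the three allowed scenarios (constant, oscillating, divergent), which actually occurs.

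By Theorem 3(i) and linearity,
\begin{equation*}
Y(s)=\sum_{\kappa=0}^{n-1}\frac{b_\kappa\,s^{\alpha-\kappa-1}}{s^\alpha-\lambda},
\end{equation*}
so the main pole of $Y(s)$ is the principal root $s_0=\lambda^{1/\alpha}$ of $s^\alpha-\lambda$. In case i), $\lambda=2^\alpha$ gives $s_0=2$ and $|s_0-1|=1$ immediately. In case ii), $\lambda<0$ with $|\lambda|=2^\alpha\cos^\alpha(\pi/\alpha)$ gives, consistently with (30), $s_0=2\cos(\pi/\alpha)\,\mathrm{e}^{-\mathrm{j}\pi/\alpha}$, and the double-angle identities $2\cos^2 x=1+\cos 2x$, $2\cos x\sin x=\sin 2x$ rewrite this as $s_0=1+\mathrm{e}^{-\mathrm{j}\,2\pi/\alpha}$, so again $|s_0-1|=1$. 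Hence Remark 1 applies in both cases and the task reduces to distinguishing the three possibilities.

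Next I would deploy the substitution $1-s=z^{-1}$ from the proof of Theorem 2, which sends $s_0$ to $z_0=1/(1-s_0)$ on the unit circle $|z|=1$, and read off the time-domain mode from standard Z-transform boundary analysis. In case ii), $z_0=-\mathrm{e}^{\mathrm{j}\,2\pi/\alpha}$ is a simple complex pole on $|z|=1$ distinct from $\pm1$ because $\alpha>2$ under the standing assumption of case iii) of Theorem 5, and together with its complex conjugate it produces a sustained sinusoid in $k$, giving the oscillation claimed. In case i), $s_0=2$ maps to $z_0=-1$; when $\alpha\le1$ only the $\kappa=0$ term is present and a simple real pole at $z_0=-1$ yields the $(-1)^{a-k}$ mode, in agreement with the closed form $\mathcal{F}_{1,1}(2,k,a)=(-1)^{a-k}$ from (31).

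The delicate subcase is $\alpha>1$ inside i), where a naive boundary-residue count at $z_0=-1$ would still predict $(-1)^{k}$ oscillation, yet the proposition claims convergence; this is the step I expect to be the main obstacle. To close it I would abandon the rational $Z$-transform picture and invert $Y(s)$ directly through the contour integral $y(k)=\frac{1}{2\pi\mathrm{j}}\oint_c Y(s)(1-s)^{-k+a}\,\mathrm{d}s$, collapsing the contour onto the branch cut of $s^\alpha$ that appears whenever $\alpha\notin\mathbb{Z}$. The idea is that the extra factors $s^{-\kappa}$ in the numerator moderate the residue at $s_0=2$, while the cut integral -- absent in the integer-order setting -- delivers a monotonically decaying contribution that dominates the bounded boundary residue. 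Making this asymptotic estimate rigorous (via a discrete Watson lemma for the cut integral, or equivalently via Theorem 3(iii) to express $\mathcal{F}_{\alpha,\kappa+1}$ as a $\kappa$-fold nabla sum of $\mathcal{F}_{\alpha,1}$) is where the genuine work lies, since the classical $Z$-transform dictionary has to be supplemented by the analytic structure that is specific to non-integer $\alpha$.
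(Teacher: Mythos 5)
First, note that the paper states this proposition without any proof (it is offered as an ``interesting proposition under critical conditions''), so there is no in-paper argument to compare against; your attempt has to stand on its own. Your reduction to the boundary regime is sound and follows the paper's own machinery (Theorem 2, Remark 1, and the pole $s_0=\lambda^{1/\alpha}$ of $Y(s)$ from (28)): the verification that $|s_0-1|=1$ in both cases, the identity $s_0=1+\mathrm{e}^{-\mathrm{j}2\pi/\alpha}$ in case ii), and the conclusion of sustained oscillation from a simple boundary pole with nonzero residue (plus decaying branch-cut terms) are all correct in substance, even though the transform is not rational for non-integer $\alpha$, so ``standard $Z$-transform boundary analysis'' really means singularity analysis of the power series in $w=1-s$ rather than partial fractions.

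The genuine gap is exactly the subcase you flag and then defer: case i) with $\alpha>1$. Your plan to close it cannot work as described. The residue of $Y(s)=\sum_{\kappa=0}^{n-1}b_\kappa\frac{s^{\alpha-\kappa-1}}{s^\alpha-\lambda}$ at $s_0=2$ (for $\lambda=2^\alpha$) is $\frac{1}{\alpha}\sum_{\kappa=0}^{n-1}b_\kappa 2^{-\kappa}$, which is generically nonzero; the factors $s^{-\kappa}$ ``moderate'' it but do not annihilate it. Hence the boundary pole contributes a mode proportional to $(1-s_0)^{-k+a}=(-1)^{k-a}$ with \emph{constant} amplitude, while the branch-cut contribution near $s=0$ decays only algebraically (like $k^{\alpha-\kappa-1}$). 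A decaying term can never ``dominate'' a non-decaying oscillation in the sense required to prove convergence, so the contour/branch-cut argument, if carried out honestly, yields bounded sustained oscillation rather than convergence. This is corroborated at the integer endpoint: for $\alpha=2$, $\lambda=4$, the difference equation $\nabla^2y=4y$ has characteristic roots $1/3$ and $-1$, so a persistent $(-1)^k$ mode is explicitly present, and by continuity of the pole location ($s_0=2$ sits exactly on the circle $|s-1|=1$ for every $\alpha$) the same mechanism operates for fractional $\alpha>1$. So the step you identified as ``the main obstacle'' is not merely unfinished: the route you sketch would have to either show that the boundary residue vanishes (it does not, except for special choices of $b_\kappa$) or reinterpret ``convergent,'' and as written it would refute rather than establish the claimed convergence. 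The remaining parts (case i) with $\alpha\le1$ and case ii)) are fine.
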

\begin{remark}\label{Remark 2}
It is worth pointing out that this paper establishes a connection of discrete fractional order systems in time domain and those in frequency domain, based on which some original properties of the $N$-transform and the discrete Mittag--Leffler function are explored. It is believed that the results derived in this note can be conveniently applied to response calculation, stability analysis and controller design for discrete fractional order systems.
\end{remark}

\section{Simulation Study}\label{Section 4}
In this section, without loss of generality, let us set $a=1$.
the zero input time--domain response of system (\ref{Eq19}) in the following four cases:
\[
\left\{\begin{array}{l}
{\rm case}\,1:\,b_0=1,\,\lambda=-0.20,\,\alpha\in\{0.1,0.2,\cdots,1.0\},\\
{\rm case}\,2:\,b_0=1,\,\lambda=-0.20,\,\alpha\in\{1.0,1.1,\cdots,2.0\},\\
{\rm case}\,3:\,b_1=1,\,\lambda=-0.20,\,\alpha\in\{1.1,1.2,\cdots,2.0\},\\
{\rm case}\,4:\,b_0=1,\,\lambda\in\{-0.04,-0.08,\cdots,-0.4\},\,\alpha=1.5,
\end{array}\right.
\]
are shown in Fig. \ref{Fig3} - Fig. \ref{Fig6}, respectively.

\begin{figure}[!htbp]
\centering
\includegraphics[width=0.65\columnwidth]{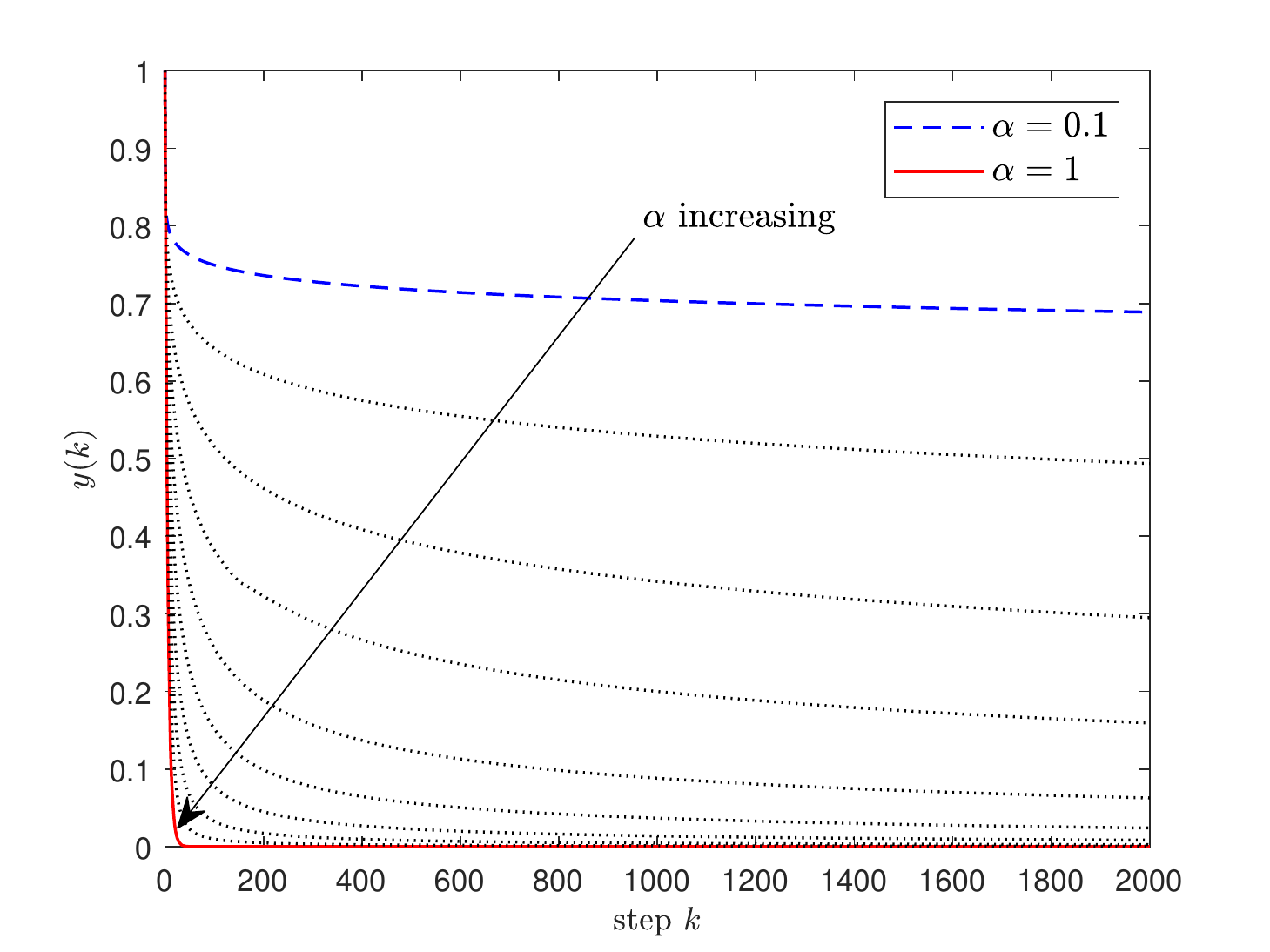}\vspace{-10pt}
\caption{Response $y(k)$ for different $\alpha\in(0,1]$.}\label{Fig3}
\end{figure}
\begin{figure}[!htbp]
\centering
\includegraphics[width=0.65\columnwidth]{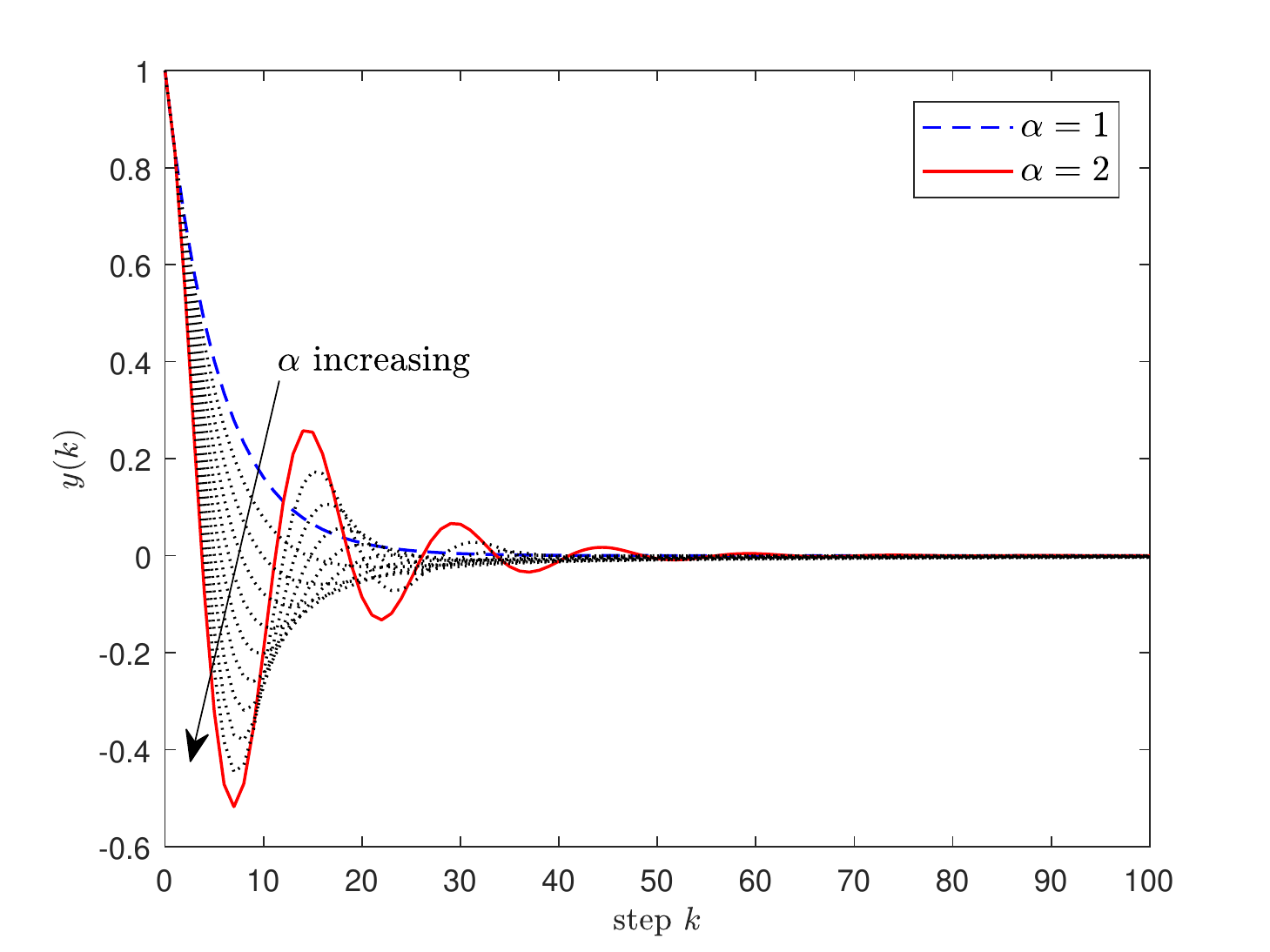}\vspace{-10pt}
\caption{Response $y(k)$ for different $\alpha\in[1,2]$ with $b_1=0$.}\label{Fig4}
\end{figure}
\begin{figure}[!htbp]
\centering
\includegraphics[width=0.65\columnwidth]{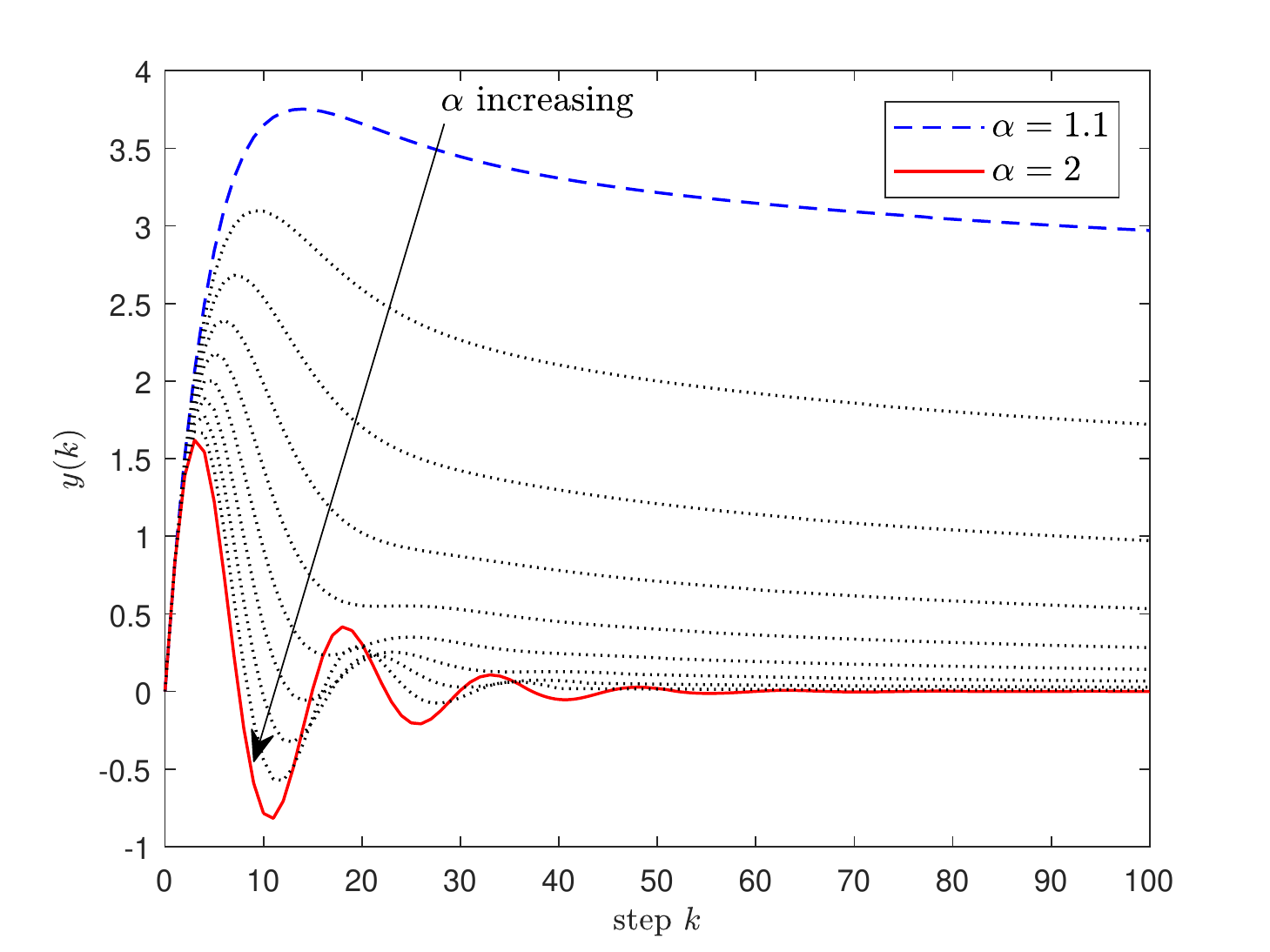}\vspace{-10pt}
\caption{Response $y(k)$ for different $\alpha\in(1,2]$ with $b_0=0$.}\label{Fig5}
\end{figure}
\begin{figure}[!htbp]
\centering
\includegraphics[width=0.65\columnwidth]{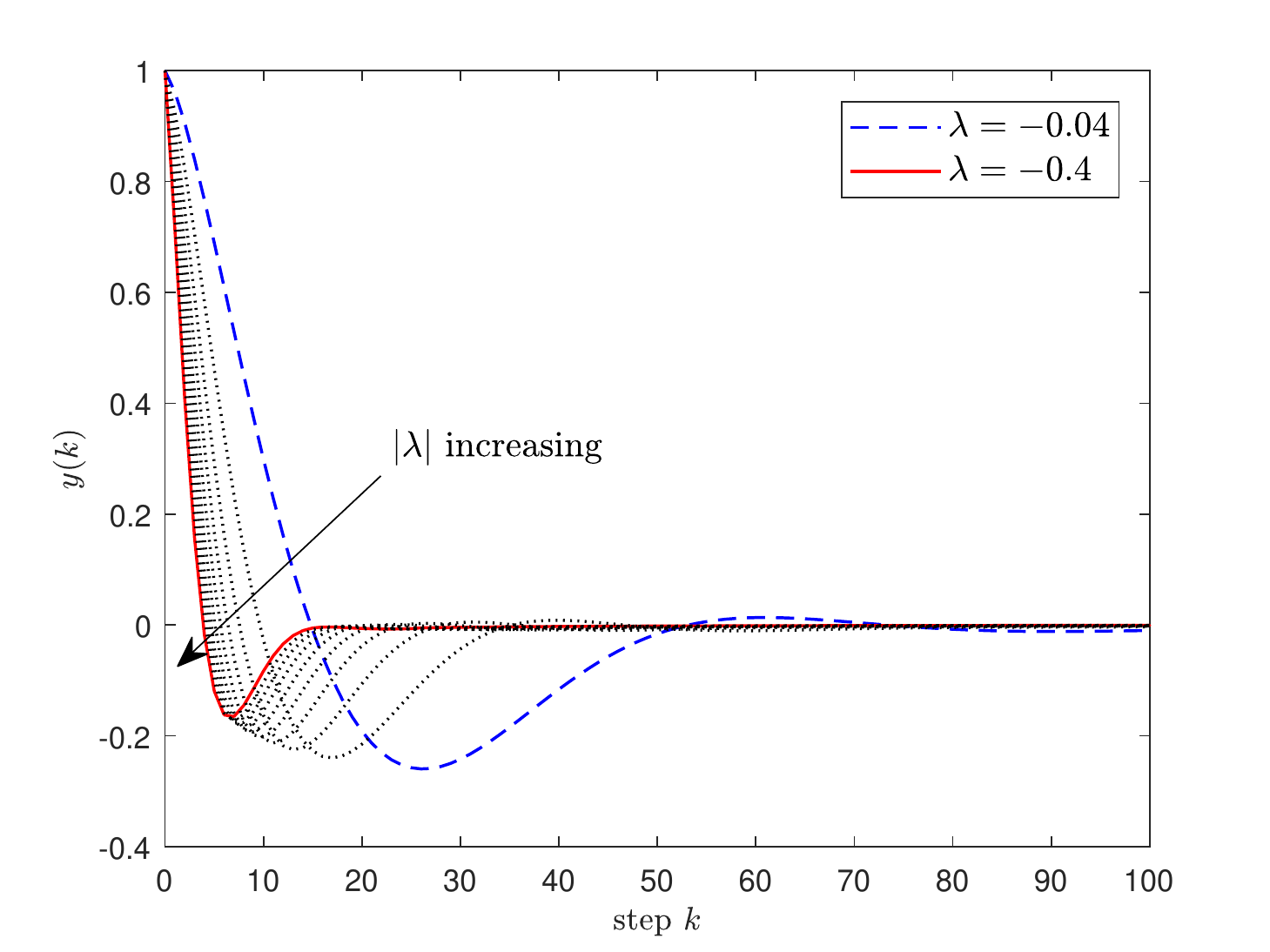}\vspace{-10pt}
\caption{Response $y(k)$ for different $\lambda<0$ with $b_1=0$.}\label{Fig6}
\end{figure}

One can observe from Fig. \ref{Fig3} that, the convergence speed becomes faster as $\alpha$ increases, when $\alpha\in (0,1]$. In particular, the system output $y(k)$ converges to $0$ at a very low speed, which coincides with that of the continuous-time case \cite{Gorenflo:2014Book}. It is shown in Fig. \ref{Fig4} that the rising time and overshoot both increase gradually as $\alpha$ increases within the interval $[1,2]$. Fig. \ref{Fig5} shows that the number of oscillation increases and the overshoot appears as $\alpha$ increases. In Fig. \ref{Fig6}, one can observe that the increase of $\left|\lambda\right|$ could speed up the convergence and meanwhile reduce the overshoot. These observations confirm our conclusions in Theorem \ref{Theorem 4}.

Besides, a number of simulations have been conducted setting $b_0\ne0$, $b_1\ne0$ and $1<\alpha\le2$. It is found that when $\alpha \in (1,1.7)$, the trajectory of $y(k)$ has no overshoot if $b_0$ and $b_1$ are chosen such that ${b_1}/{b_0} \in [1,9]$. Several questions then arise. Does there exist a constant $\alpha_c$ such that when $\alpha<\alpha_c$, the trajectory of $y(k)$ could have no overshoot by choosing $b_0$ and $b_1$ appropriately? How to choose $b_0$ and $b_1$, if such a constant $\alpha_c$ exists? To rigorously answer these questions will be the subjects of our on--going research.

%

\section{Conclusions}\label{Section 5}
The time--domain response of nabla discrete fractional order systems has been studied in this paper. Several useful properties of the nabla discrete Laplace transform and the discrete Mittag--Leffler function are explored, based on which an explicit solution to the system dynamic equation is presented and the time--domain response under zero input condition is comprehensively analyzed.

%
\section*{Acknowledgements}
The work described in this paper was supported by the National Natural Science Foundation of China (61601431, 61573332), the Anhui Provincial Natural Science Foundation (1708085QF141), the Fundamental Research Funds for the Central Universities (WK2100100028) and the General Financial Grant from the China Postdoctoral Science Foundation (2016M602032).

%

\phantomsection
\addcontentsline{toc}{section}{References}
\section*{References}
\bibliographystyle{model3-num-names}
\bibliography{datebase}

\end{document}